\newtheorem{theo}{Theorem}[section]
\newtheorem*{theoA}{Theorem A}
\newtheorem*{theoB}{Theorem B}
\newtheorem*{theoC}{Theorem C}
\newtheorem*{theoD}{Theorem D}
\newtheorem{cor}[theo]{Corollary}
\newtheorem{dfn}[theo]{Definition}
\newtheorem{lma}[theo]{Lemma}
\newtheorem{prop}[theo]{Proposition}
\newtheorem{rmk}[theo]{Remark}
\newtheorem*{rk}{Remark}
\newcommand{\bt}{\begin{theo}}
\newcommand{\et}{\end{theo}}
\newcommand{\be}{\begin{equation}}
\newcommand{\ee}{\end{equation}}
\newcommand{\br}{\begin{rmk}}
\newcommand{\er}{\end{rmk}}
\newcommand{\bc}{\begin{cor}}
\newcommand{\ec}{\end{cor}}
\newcommand{\bp}{\begin{prop}}
\newcommand{\ep}{\end{prop}}
\newcommand{\bl}{\begin{lma}}
\newcommand{\el}{\end{lma}}
\newcommand{\bd}{\begin{dfn}}
\newcommand{\ed}{\end{dfn}}
\newcommand{\s}{\sigma}
\newcommand{\ga}{\gamma}
\newcommand{\Ga}{\Gamma}
\newcommand{\la}{\lambda}
\newcommand{\La}{\Lambda}
\newcommand{\vp}{\varphi}
\newcommand{\ka}{\kappa}
\newcommand{\lea}{\langle}
\newcommand{\ria}{\rangle}
\newcommand{\ot}{\otimes}
\newcommand{\pc}{\cdot}
\newcommand{\nb}{\nabla}
\newcommand{\tr}{{\rm tr}}
\newcommand{\vol}{\ast 1}
\newcommand{\Ric}{{\rm Ric}}
\def\r{{\rm rk}}
\def\Cl{{\rm C}\ell(M)}
\def\ClS{{\rm C}\ell(S^n)}
\def\X{\mathfrak X }
\def\K{\mathcal K}
\newcommand{\R}{\mathbb{R}}
\newcommand{\Rc}{\mathcal{R}}
\numberwithin{equation}{section}
\newcommand{\nocontentsline}[3]{}
\newcommand{\tocless}[2]{\bgroup\let\addcontentsline=\nocontentsline#1{#2}\egroup}
\begin{document}

\title{Twistor sections of Dirac bundles}


\author[Cardona]{Sergio A. H. Cardona${}^\dagger$}
\address[$\dagger$]{Instituto de Matem\'aticas, Universidad Nacional Aut\'onoma de M\'exico, Oaxaca de Ju\'a\-rez, M\'e\-xi\-co.}
\curraddr{}
\email{sholguin@im.unam.mx}
\thanks{($\dagger$) {CONACYT-UNAM} Research fellow.}

\author[Sol\'orzano]{Pedro Sol\'orzano${}^\star$}
\address[$\star$]{Instituto de Matem\'aticas, Universidad Nacional Aut\'onoma de M\'exico, Oaxaca de Ju\'a\-rez, M\'e\-xi\-co.}
\curraddr{}
\email{pedro.solorzano@matem.unam.mx}
\thanks{($\star$) {CONACYT-UNAM} Research fellow.}

\author[T\'ellez]{Iv\'an T\'ellez${}^\ddag$}
\address[$\ddag$]{Facultad de Econom\'ia. Universidad Aut\'onoma de San Luis Potos\'i, San Luis Po\-to\-s\'i, M\'e\-xi\-co.}
\curraddr{}
\email{ivan.tellez@uaslp.mx}
\thanks{($\ddag$) This author was supported by FORDECYT Grant No. 265667.}

\begin{abstract}
A Dirac bundle is a euclidean bundle over a riemannian manifold $M$ which is a compatible left $\Cl$-module, together with a metric connection also compatible with the Clifford action in a natural way. We prove some vanishing theorems and introduce the twistor equation within this framework. In particular, we exhibit a characterization of  solutions for this equation in terms of the Dirac operator $D$ and a suitable Weitzenb\"ock-type curvature operator $\Rc$. Finally, we analyze the especial case of the Clifford bundle to prove existence of nontrivial solutions of the twistor equation on spheres.
\end{abstract}
\subjclass[2010]{Primary 53C28; Secondary 53C27, 53C07}
\keywords{Dirac bundles, euclidean bundles, Dirac operators, twistors, Killing sections}

\maketitle

\section{Introduction}
The main purpose of this paper is to study some basic properties of Dirac bundles; in particular, the existence of vanishing theorems and of solutions to some classically motivated equations such as the twistor equation or the Killing equation.  Dirac bundles are one possible generalization of spin bundles to the case when no spin structure is required. In the latter, the aforementioned equations have been extensively studied, see for example \cite{BFGK, Friedrich, Friedrich1, FriedPok, Habermann, Habermann1, KuRa, Lichnerowicz}. As mentioned in the abstract, a Dirac bundle is a euclidean bundle over a riemannian manifold $M$ which is a compatible left $\Cl$-module, together with a metric connection also compatible with the Clifford action in a particular sense. With the above data a natural Dirac operator $D$ can be defined. The definition of a Dirac bundle was introduced in \cite{Lawson}.  Therein, some of the properties of Dirac bundles are developed and classical vanishing theorems of Bochner and Lichnerowicz are proved using {\it a generalized Bochner identity}, where the corresponding curvature term $\Rc$ is constructed using the curvature of the Dirac bundle. We call $\Rc$ the {\it Weitzenb\"ock curvature operator} of the Dirac bundle.

One of the fundamental ideas of twistor theory is that the geometry of a real structure, such as space-time, may be studied using an associated complex space \cite{Penrose1}; regarding the geometry and properties of the real structure as derived from this complex space. This idea  has been applied to reformulate various physical concepts in terms of twistors \cite{Penrose0} and has been adapted and developed in mathematics by many authors. For example, Atiyah and Ward \cite{AW} used the Penrose twistor transform to give a correspondence between algebraic bundles on the complex projective 3-space and minimum action solutions for $SU(2)$ Yang-Mills fields in the euclidean 4-space. 

In \cite{Penrose2}, Penrose introduced the so-called {\it twistor equation} for certain $4$-manifolds. In \cite{AHS} Atiyah, Hitchin and Singer showed that if $X$ is a self-dual 4-manifold then the projective bundle of anti-self-dual spinors inherits the structure of a complex analytic 3-manifold using solutions to a suitable twistor equation.

The study of the space of twistor spinors---solutions to the twistor equation on a spin manifold---has been carried out since then. There are special solutions to this equation called {\it Killing spinors}. It has been proved \cite{Friedrich} that if a spin manifold $M$ carries a Killing spinor then $M$ is a locally-irreducible Einstein space. In fact, the spinor bundle over the sphere is trivialized by Killing spinors \cite{Bar}. Also, if $(M,g)$ is a compact connected riemannian spin manifold carrying a nontrivial twistor spinor there exists an Einstein metric $g^{*}$, conformally equivalent to $g$, such that the space of twistor spinors on $(M,g)$ coincides with the space of Killing spinors on $(M,g^{*})$. For a compendium of results and properties of twistor spinors see \cite{BFGK}.

We extend the notion of twistor spinor to Dirac bundles by studying the natural translation of the twistor equation to this setting. The main results of this paper are the following. 
\begin{theoA} If the Weitzenb\"ock curvature operator $\Rc$ of a Dirac bundle over a compact riemannian manifold is positive semi-definite, then every section $\s$ of the kernel of the Dirac operator $D$ is parallel and satisfies 
\[
\lea\Rc\s, \s\ria=0.
\]
Furthermore, if $\Rc$ is also positive definite at some point, then $D$ has trivial kernel. 
\end{theoA}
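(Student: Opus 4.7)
The plan is to apply the generalized Bochner identity of \cite{Lawson} recalled in the introduction, namely the Weitzenb\"ock formula
\[
D^2 = \nb^*\nb + \Rc,
\]
to any section $\s$ of the kernel of $D$. First I would note that $D\s=0$ implies $D^2\s=0$, whence $\nb^*\nb\s + \Rc\s = 0$. Pairing this identity pointwise with $\s$ and integrating over the compact manifold $M$ (closed, so no boundary terms arise from the formal adjoint of $\nb$), I would obtain
\[
\int_M |\nb\s|^2 \,+\, \int_M \lea \Rc\s,\s\ria \;=\; 0.
\]

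Under the positive semi-definiteness hypothesis on $\Rc$, both integrands are pointwise nonnegative, so both must vanish identically on $M$. This gives simultaneously $\nb\s\equiv 0$, so that $\s$ is parallel, and the pointwise identity $\lea\Rc\s,\s\ria\equiv 0$, which is the stated equation.

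For the final claim, assume $\Rc_p$ is positive definite at some $p\in M$. Then the identity $\lea\Rc_p\s(p),\s(p)\ria=0$ just established forces $\s(p)=0$; since $\s$ is parallel, parallel transport along any curve emanating from $p$ shows $\s\equiv 0$ on the connected component of $p$. Assuming $M$ connected (or else arguing componentwise and noting that any component meeting a positivity point forces the section to vanish there), this gives $\s\equiv 0$ globally and hence $\ker D=\{0\}$.

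There is not really a substantial obstacle here; the one step meriting care is justifying the integration by parts in the general Dirac-bundle context. This is immediate from the fact that $\nb$ is metric (which is built into the definition of a Dirac bundle) together with the closedness of $M$, so no additional hypotheses beyond those of the theorem are required. The argument is a direct transcription of the classical Bochner--Lichnerowicz vanishing method, with the abstract Weitzenb\"ock term $\Rc$ playing the role of scalar or Ricci curvature.
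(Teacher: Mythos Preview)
Your proof is correct and follows essentially the same approach as the paper: both apply the generalized Bochner identity $D^2=\nb^*\nb+\Rc$ to $\s\in\ker D$, integrate over $M$ using \eqref{CLapSelfad}, and use nonnegativity of both summands to conclude $\nb\s=0$ and $\lea\Rc\s,\s\ria=0$. For the second claim your argument is marginally cleaner---you use the pointwise identity $\lea\Rc_p\s(p),\s(p)\ria=0$ directly at the point of positive definiteness, whereas the paper passes through positive definiteness on a neighborhood and the global inner product---but the underlying idea is the same.
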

This is an elementary generalization of a Bochner-type vanishing theorem when no spin structure is assumed nor required. 
\begin{theoB}  
Let $(S,\nb,\lea\cdot , \cdot \ria)$ be a Dirac bundle over a compact riemannian $n$-manifold $M$, and $D$ and $\Rc$ the aforementioned Dirac and Weitzenb\"ock curvature operators, respectively. A section $\s$ of $S$ is a twistor section if and only if
\[
D^2\s = \frac{n}{n-1}\Rc\s.
\]
Moreover, in this case,
\[
\nb_XD\s = \frac{n}{n-2}\left[\frac{1}{n-1}X\pc \Rc-\frac{1}{2}\Ric_X\right]\s,
\]
where $\Ric_X:S\to S$ is the Ricci curvature operator on $S$.
\end{theoB}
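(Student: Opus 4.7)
The plan is to establish the characterization in two directions---one a pointwise computation from the twistor equation and the other an integration argument using compactness---and then to derive the formula for $\nb_X D\s$ by differentiating the twistor equation a second time and contracting via Clifford multiplication. Throughout, I will work at an arbitrary point $p\in M$ with a local orthonormal frame $\{e_i\}_{i=1}^n$ of $M$ synchronous at $p$, so that $\nb_{e_j}e_i|_p=0$, and denote by $T_X\s := \nb_X\s + \tfrac{1}{n}X\pc D\s$ the twistor operator, so that $\s$ is a twistor section iff $T\s\equiv 0$.

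For the forward implication, differentiate the twistor equation $\nb_{e_i}\s = -\tfrac{1}{n}\,e_i\pc D\s$ once more in the direction of $e_i$ and evaluate at $p$ to get $\nb_{e_i}\nb_{e_i}\s = -\tfrac{1}{n}\,e_i\pc\nb_{e_i}D\s$. Summing in $i$ and using $\nb^*\nb\s|_p = -\sum_i \nb_{e_i}\nb_{e_i}\s|_p$ together with $D = \sum_i e_i\pc\nb_{e_i}$, one obtains $\nb^*\nb\s = \tfrac{1}{n}\,D^2\s$. Combining this with the generalized Weitzenb\"ock identity $D^2 = \nb^*\nb + \Rc$ already used in Theorem A yields $D^2\s = \tfrac{n}{n-1}\Rc\s$ pointwise on $M$.

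For the converse, I would expand the pointwise squared norm $|T\s|^2 := \sum_i |T_{e_i}\s|^2$ using the skew-symmetry of Clifford multiplication $\lea X\pc\p,\psi\ria = -\lea\p,X\pc\psi\ria$ and $e_i^2=-1$; a direct calculation produces the pointwise identity $|T\s|^2 = |\nb\s|^2 - \tfrac{1}{n}|D\s|^2$. Integrating over the compact manifold $M$ and invoking the Weitzenb\"ock identity together with the formal self-adjointness of $D$ gives
\[
\int_M |T\s|^2 = \int_M \lea\tfrac{n-1}{n}D^2\s - \Rc\s,\s\ria.
\]
Under the hypothesis $D^2\s = \tfrac{n}{n-1}\Rc\s$ the right-hand side vanishes, forcing $T\s\equiv 0$ and hence $\s$ to be a twistor section.

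For the moreover statement, I would differentiate $\nb_X\s = -\tfrac{1}{n}X\pc D\s$ a second time in a direction $Y$ and antisymmetrize in $X$ and $Y$; torsion-freeness of the Levi-Civita connection eliminates the $\nb_Y X$ and $\nb_X Y$ terms, and the bundle curvature is given explicitly by
\[
R^S(X,Y)\s = \tfrac{1}{n}\bigl(X\pc\nb_Y D\s - Y\pc\nb_X D\s\bigr).
\]
Setting $Y=e_i$, Clifford-multiplying by $e_i$ on the left and summing in $i$, the Clifford relation $e_i\pc X + X\pc e_i = -2\lea e_i,X\ria$ together with $\sum_i e_i\pc\nb_{e_i}D\s = D^2\s$ produces
\[
\tfrac{n-2}{n}\,\nb_X D\s = \sum_i e_i\pc R^S(X,e_i)\s + \tfrac{1}{n}\,X\pc D^2\s.
\]
Substituting $D^2\s=\tfrac{n}{n-1}\Rc\s$ from the characterization just proved, the stated formula follows provided one has the Ricci contraction identity $\sum_i e_i\pc R^S(X,e_i)\s = -\tfrac{1}{2}\Ric_X\s$. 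I expect the main obstacle to be this last identification for Dirac bundles: in the classical spin case it follows from the first Bianchi identity for the Riemann tensor of $M$, and here the analogous statement should rest on the compatibility between the Dirac bundle connection and the Clifford action, which constrains how $R^S$ is built from the Levi-Civita curvature of $M$.
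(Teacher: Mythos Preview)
Your proof is correct and follows essentially the same route as the paper's: the forward direction via $\nb^*\nb\s=\tfrac{1}{n}D^2\s$ and Bochner, the converse via the $L^2$-norm of the twistor operator, and the ``moreover'' part via the curvature identity $R(X,Y)\s=\tfrac{1}{n}(X\pc\nb_YD\s-Y\pc\nb_XD\s)$ followed by a Clifford contraction.

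The one point worth flagging is your final paragraph. The identity $\sum_i e_i\pc R^S(X,e_i)\s=-\tfrac{1}{2}\Ric_X\s$ is not an obstacle at all here: in this paper $\Ric_X$ is \emph{defined} on a general Dirac bundle by $\Ric_X\s:=2\sum_i e_i\pc R(e_i,X)\s$, where $R$ is the curvature of the Dirac-bundle connection itself. Your desired identity is then immediate from the antisymmetry $R(X,e_i)=-R(e_i,X)$; no Bianchi identity or spin-specific structure is needed. In the classical spin case one invokes Bianchi to identify this operator with Clifford multiplication by the Ricci endomorphism of $TM$, but for an arbitrary Dirac bundle no such identification is claimed---$\Ric_X$ is simply notation for the contracted bundle curvature, and the formula for $\nb_XD\s$ follows directly.
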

This theorem is a generalization of a well-known result of Lichnerowicz  (see Proposition 2 in \cite{Lich2} p. 335) in the context of spin bundles. 
\begin{theoC}
On a Dirac bundle $S$ over a compact riemannian $n$-manifold  $M$ the following are equivalent:
\begin{enumerate}[\indent\rm(i)]
\item A section $\s$ of $S$ is a Killing section with constant $\la$.
\item A section $\s$ of $S$ satisfies $D\s=-n\la\s$ and $\Rc\s = \la^2n(n-1)\s$.
\end{enumerate}
\end{theoC}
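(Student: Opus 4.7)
The plan is to use Theorem B as the bridge between the Killing identity and the algebraic characterization in (ii), since Theorem B is stated as an equivalence and can therefore be invoked in both directions.

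For the implication (i)$\Rightarrow$(ii), I would start from the defining Killing identity $\nb_X\s=\la X\pc\s$ and trace it in a local orthonormal frame $\{e_i\}$. Using the Clifford relation $e_i\pc e_i=-1$ the trace yields $D\s=\sum_i e_i\pc\nb_{e_i}\s=\la\sum_i e_i\pc e_i\pc\s=-n\la\s$, which gives the first half of (ii). The same computation shows that $\nb_X\s+\tfrac{1}{n}X\pc D\s=0$, so a Killing section is automatically a twistor section, and Theorem B applies to give $D^2\s=\tfrac{n}{n-1}\Rc\s$. Substituting $D^2\s=-n\la D\s=n^2\la^2\s$ on the left then forces $\Rc\s=\la^2n(n-1)\s$, which is the second half of (ii).

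For the converse (ii)$\Rightarrow$(i), I would first observe that the two relations in (ii) combine algebraically to
\[
D^2\s=-n\la D\s=n^2\la^2\s=\frac{n}{n-1}\Rc\s,
\]
so Theorem B applies in the reverse direction and guarantees that $\s$ is a twistor section, i.e.\ $\nb_X\s=-\tfrac{1}{n}X\pc D\s$. Plugging $D\s=-n\la\s$ into this twistor equation gives exactly $\nb_X\s=\la X\pc\s$ for every vector field $X$, which is the Killing condition with constant $\la$.

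There is really no substantial obstacle here: the heavy lifting is done by Theorem B, and the rest is algebraic manipulation using $e_i\pc e_i=-1$ and the linearity of the Clifford action. The one point that deserves attention is the observation, in the forward direction, that a Killing section is automatically a twistor section, without which Theorem B could not be applied; this is the only step where one must actually compute rather than simply quote earlier results.
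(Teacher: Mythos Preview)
Your proof is correct. The implication (i)$\Rightarrow$(ii) is handled exactly as in the paper: trace the Killing identity to get $D\s=-n\la\s$, observe that this makes $\s$ a twistor section, and then invoke Theorem~B to extract $\Rc\s=\la^2n(n-1)\s$ from $D^2\s=n^2\la^2\s$.

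For (ii)$\Rightarrow$(i) your route differs from the paper's. You combine the two hypotheses into $D^2\s=\tfrac{n}{n-1}\Rc\s$, use the converse direction of Theorem~B (which is where compactness enters) to conclude that $\s$ is twistor, and then substitute $D\s=-n\la\s$ into the twistor equation to get the Killing identity. The paper instead introduces a modified metric connection $\nb^{-\la}_X\s=\nb_X\s-\la X\pc\s$, proves a Weitzenb\"ock-type formula $(D+\la)^2\s=\nb^{-\la*}\nb^{-\la}\s+\Rc\s+(1-n)\la^2\s$, substitutes (ii) to obtain $\nb^{-\la*}\nb^{-\la}\s=0$, and then uses compactness to conclude that $\s$ is $\nb^{-\la}$-parallel, which is exactly the Killing condition. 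Your argument is shorter and recycles Theorem~B cleanly; the paper's argument is more self-contained (it does not appeal to the converse half of Theorem~B) and develops the modified-connection technique, which the paper reuses immediately afterward to prove the eigenvalue estimate $\mu^2\ge\tfrac{n}{n-1}R_0$.
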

We would like to emphasize that the direction (i) implies (ii) in Theorem C is well-known in the geometry of spin bundles (See \cite{BFGK}, pp. 30-31). The converse in that setting---to the best of our knowledge---is not in the literature. 
Notice that in particular Theorem C implies the equivalence between (i) and (ii) in the classical case of spin bundles.
\begin{theoD} 
The twistor sections of $\ClS$, $n\geq 2$, are given by
\[
\s = c_1+df_1+d^*(\ast f_2) + \ast c_2,
\]
with constants $c_1, c_2$ and functions $f_1, f_2: S^n\to\R$ such that $\Delta f_i =n f_i$.
\end{theoD}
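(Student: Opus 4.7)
The strategy is to invoke Theorem B: a section $\s$ of $\ClS$ is a twistor section if and only if $D^2\s=\frac{n}{n-1}\Rc\s$. Under the canonical identification of $\ClS$ with the bundle of mixed differential forms $\Lambda^*T^*S^n$, Clifford multiplication takes the form $X\pc\omega = X^\flat\wedge\omega-\iota_X\omega$, the Dirac operator becomes $D=d+d^*$, and hence $D^2=\Delta$ is the Hodge Laplacian. The Weitzenb\"ock operator $\Rc = D^2 - \nb^*\nb$ then coincides with the classical curvature term on forms; on the round unit $n$-sphere it preserves degree and equals $\Rc|_{\Lambda^p}=p(n-p)\cdot I$.

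Decomposing $\s = \sum_{p=0}^{n}\s_p$ by form degree, the equation from Theorem B decouples into
\[
\Delta\s_p = \la_p\s_p,\qquad \la_p := \frac{np(n-p)}{n-1},\qquad 0\leq p\leq n.
\]
For $p=0$ and $p=n$ one has $\la_p=0$, which forces $\s_0$ to be a constant $c_1$ and $\s_n = \ast c_2$ for some constant $c_2$. For $1\leq p\leq n-1$ there are no harmonic $p$-forms on $S^n$, so the Hodge decomposition yields $\s_p = d\al + d^*\beta$, and the eigenvalue equation splits into its exact and co-exact parts.

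I would then invoke the classical values of the first non-zero eigenvalues of $\Delta$ on $S^n$: on exact $p$-forms it equals $p(n-p+1)$, and on co-exact $p$-forms it equals $(p+1)(n-p)$. A direct computation gives
\[
p(n-p+1)-\la_p = \frac{p(p-1)}{n-1},\qquad (p+1)(n-p)-\la_p = \frac{(n-p)(n-p-1)}{n-1},
\]
both non-negative, with equality only for $p\in\{0,1\}$ (respectively $p\in\{n-1,n\}$). Thus for $2\leq p\leq n-2$ the spectrum of $\Delta$ on $p$-forms lies strictly above $\la_p$ and $\s_p = 0$. For $p=1$ only the exact part can survive, giving $\s_1 = df_1$ with $\Delta df_1 = n\,df_1$, hence $\Delta f_1 = nf_1$ after absorbing a constant; for $p=n-1$ only the co-exact part can survive, giving $\s_{n-1} = d^*(\ast f_2)$ with $\Delta f_2 = nf_2$. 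The case $n=2$ is handled automatically, the degrees $1$ and $n-1$ then coinciding.

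The main technical input is the spectrum of $\Delta$ on exact and co-exact $p$-forms of $S^n$. These eigenvalue formulas can be established via the Weitzenb\"ock identity $\Delta = \nb^*\nb + p(n-p)$ together with the sharp Gallot-Meyer minoration, or from the explicit decomposition of $\Omega^p(S^n)$ under $SO(n+1)$; they are classical and may simply be invoked. Once they are in hand, the rest of the argument is a routine algebraic comparison of eigenvalues.
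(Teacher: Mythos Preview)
Your proposal is correct and follows essentially the same route as the paper: reduce via Theorem~B to the degree-wise eigenvalue equations $\Delta\s_p=\frac{n}{n-1}p(n-p)\s_p$, then compare with the known first eigenvalues $p(n-p+1)$ and $(p+1)(n-p)$ on exact and co-exact $p$-forms of $S^n$. Your explicit identities $p(n-p+1)-\la_p=\frac{p(p-1)}{n-1}$ and $(p+1)(n-p)-\la_p=\frac{(n-p)(n-p-1)}{n-1}$ handle all $n\geq 2$ uniformly, whereas the paper treats $n=2$, $n=3$ and $n\geq 4$ separately; this is a cosmetic streamlining rather than a different argument.
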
 

This characterizes the solutions to the twistor equation in this setting, thus exhibiting the existence of nontrivial solutions different from those of the classical spin case.  

Theorems A, B, C and D appear below as Theorems \ref{TrivialKer}, \ref{twistorchar}, \ref{equiv1}, and  \ref{cliffSntwistor} respectively. 

In the Appendix we analyze the Killing equation on Clifford bundles over surfaces. In light of Theorem \ref{Kill2D}, this equation is much more restrictive than the twistor equation.

\setcounter{tocdepth}{1}
\tableofcontents
\section{Notations and conventions}

Throughout this communication some familiarity with the theory of Clifford algebras and spin bundles is expected. Classical references for these topics can be found within \cite{BFGK, Friedrich, Lawson}.

From now on, $M$ denotes an $n$-dimensional riemannian manifold, $\Cl$ its Clifford bundle, and it is assumed that all associated bundles (tangent, cotangent, exterior, Clifford, etc.) are endowed with the corresponding naturally induced metrics $\lea\cdot , \cdot \ria$ and metric connections $\nb$. The corresponding pointwise norms is denoted by $|\cdot|$.

We also adhere to the following conventions. Tangent vectors and vector fields are denoted with Latin capital letters $X,Y$, etc. and the space of vector fields by $\X(M)$. Bundles over $M$ are denoted by $E$, $S$, etc., and their spaces of sections (resp. compactly-supported sections) are denoted by $\Gamma (E)$, etc. (resp. by $\Gamma_{\rm c}(E)$, etc.). Individual elements and sections are denoted by Greek letters $\sigma$, $\tau$ etc., except for elements and sections of the Clifford bundle $\Cl$, which ---within the context of Clifford multiplication--- are denoted by Latin letters $a$, $b$, etc. The fiber over a point $p$ in $M$ of a bundle $E$ is denoted by $E_p$. Likewise, to avoid possible ambiguities, elements of $E_p$ are sometimes denoted as $\s_p$. 
\section{Dirac Bundles}
In this section we review the notion of Dirac bundle introduced in \cite{Lawson}, mention some fundamental properties and prove some basic new ones.  A Dirac bundle is a generalization a spin bundle to the case when no spin structure is required. Hence, it is possible to define a Dirac operator on such bundles.  We also recall the definitions of the Weitzenb\"ock curvature operator ---prove some elementary properties of it--- and of the connection laplacian of a Dirac bundle, and review the generalized Bochner identity that relates them.
\begin{dfn}[{\bf Dirac bundle}\hspace{-4pt} ]\label{defDirac}
A bundle $S$ over $M$ is a Dirac bundle if it is a left $\Cl$--module, together with a euclidean metric $\lea\cdot , \cdot \ria$ and a metric connection $\nb$,  satisfying the following two conditions:
\begin{enumerate}[\indent\rm(i)]
\item\label{prop1Dirac} For each $X\in T_pM$ and $\s,\tau\in S_p$
\begin{equation}\label{prop1Diraceq}
\lea X\cdot \s, \tau\ria +\lea \s, X\cdot \tau\ria = 0;
\end{equation}
 \item\label{prop2Dirac} for all $a\in \Ga(\Cl)$ and $\s\in \Ga(S)$, 
\begin{equation}\label{prop2Diraceq}
\nb (a\cdot\s ) = (\nabla a) \cdot \s + a\cdot \nb\s.
\end{equation}
\end{enumerate}
\end{dfn}
Spin bundles are automatically Dirac bundles. Another immediate example of a Dirac bundle is the Clifford bundle of $M$ itself. Since as a vector bundle $\Cl$ is isomorphic to the exterior bundle (see \cite{Lawson} p. 10), 
\if  
Using this identification, In terms of an orthonormal basis $e_1,\ldots, e_n$ of $T_pM$, the Clifford multiplication is given by
\begin{equation}\label{Cliffordmult}
e_i\pc \psi = e_i\wedge\psi - \iota_{e_i} \psi\,,
\end{equation}
with $\psi\in\La^{*}(M)_p$ 
\fi
the existence of Dirac bundles is not topologically obstructed as in the case of the bundle of spinors on a spin manifold. Several of the algebraic properties of the Clifford action that are well known in the spin case also hold here, e.g. from \eqref{prop1Dirac} it follows that for $X\in T_pM$ and $\s\in S_p$ 
\be\label{Cliffnorm}
|X\cdot\s|=|X||\s|.
\ee
Another property of $\Cl$ that is required in the sequel is the following (see \cite{Friedrich} p. 6. for details): there exists a fiberwise-linear involution $\ga$ on $\Cl$ such that for all $X\in T_pM$
\be\label{invol1}
\ga_p(X)=X,
\ee 
and  for all $a,b\in \Cl_p$
\be\label{invol2}
\ga_p(a\pc b)=\ga_p(b)\pc\ga_p(a).
\ee
Notice that from \eqref{invol2} it is readily verified that for any $a\in \Ga(\Cl)$,
\be\label{gammanabla}
\nb(\ga(a))=\ga(\nb a).
\ee
where $\nb$ is the natural connection on $\Cl$.

Recall that a parallel subbundle of a euclidean bundle with connection is a subbundle that is invariant under parallel translations; equivalently, if for any section of the subbundle its covariant derivative is also a section of the subbundle (cf. \cite{KNvol1} p. 114). 

The following proposition establishes some elementary properties of Dirac bundles. 

\begin{prop}\label{trivDirac} Let $S$, $S_1$ and $S_2$ be Dirac bundles and $E$ a euclidean vector bundle over $M$. The following are Dirac bundles over $M$:
\begin{enumerate}[\indent\rm(i)]
\item\label{Diracprop1} Any parallel subbundle of $S$ which is also a left $\Cl-$submodule of $S$ with the restricted action from $S$.
\item\label{Diracprop2} The Whitney sum $S_1\oplus S_2$ of $S_1$ and $S_2$ with the action defined componentwise.
\item\label{Diracprop3} The dual $S^{*}$ of $S$ with the action given by
\begin{equation}\label{DualCliffAction}
(a\pc \s^{*})(\xi) = \s^{*}(\gamma(a)\pc\xi),
\end{equation}
for any  $\s^*\in S^*$ and $a\in\Cl$ over the same point and $\ga$ the aforementioned involution.
\item\label{Diracprop4} The tensor product $S\otimes E$ of $S$ and $E$ with the action given by
\be
a\cdot(\sigma\otimes \xi)=(a\cdot \s)\otimes\xi,
\ee
for any  $\s\in S$, $\xi\in E$ and $a\in\Cl$ over the same point.
\end{enumerate}
\end{prop}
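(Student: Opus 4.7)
The plan is to supply each of the four bundles with its natural induced metric and metric connection, prescribe the Clifford action as in the statement, and then verify the two axioms \eqref{prop1Diraceq} and \eqref{prop2Diraceq} of Definition \ref{defDirac}. In all four cases the left $\Cl$-module structure and the compatibility of the metric connection with the base metric come essentially for free; the work is to check the two Clifford-compatibility axioms.

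For \eqref{Diracprop1}, equip the parallel submodule with the restricted metric, restricted connection and restricted Clifford action. That restricting the metric and the connection produces a metric connection is standard, using that the subbundle is parallel (so $\nb$ preserves sections). Axiom \eqref{prop1Diraceq} is inherited tautologically from $S$, and \eqref{prop2Diraceq} is inherited because the submodule is both parallel and $\Cl$-invariant, so both sides of \eqref{prop2Diraceq} computed in $S$ already lie in the submodule. For \eqref{Diracprop2}, take the orthogonal sum of the metrics, the direct sum of the connections and the componentwise action; then both axioms follow from their validity on each summand by expanding the inner product and the connection componentwise.

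The case \eqref{Diracprop3} is the one to spend care on, and is the main obstacle. One first checks that the prescription \eqref{DualCliffAction} actually defines a \emph{left} module structure: for $a,b\in\Cl_p$ and $\xi\in S_p$,
\[
((ab)\pc\s^{*})(\xi)=\s^{*}(\ga(ab)\pc\xi)=\s^{*}(\ga(b)\pc\ga(a)\pc\xi)=(b\pc\s^{*})(\ga(a)\pc\xi)=(a\pc(b\pc\s^{*}))(\xi),
\]
using \eqref{invol2}, so associativity of the action hinges crucially on the order-reversal of $\ga$. Then, equipping $S^{*}$ with the dual metric and dual connection, one verifies \eqref{prop1Diraceq} either via the musical isomorphism between $S$ and $S^{*}$ or directly: by \eqref{invol1}, $\ga(X)=X$, so $X\pc$ acts on $S^{*}$ as the transpose of $X\pc$ on $S$, which is skew by \eqref{prop1Diraceq} in $S$. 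Finally, for \eqref{prop2Diraceq} in $S^{*}$, evaluate on a test section $\xi\in\Gamma(S)$:
\[
(\nb_{X}(a\pc\s^{*}))(\xi) = X(\s^{*}(\ga(a)\pc\xi)) - \s^{*}(\ga(a)\pc\nb_{X}\xi),
\]
expand the first term using the definition of $\nb$ on $S^{*}$ and \eqref{prop2Diraceq} on $S$, and use the key identity \eqref{gammanabla} to convert $\nb(\ga(a))$ into $\ga(\nb a)$; the result rearranges into $((\nb_{X}a)\pc\s^{*}+a\pc\nb_{X}\s^{*})(\xi)$.

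For \eqref{Diracprop4}, take the tensor product metric, the tensor product connection $\nb^{S\ot E}=\nb^{S}\ot 1+1\ot\nb^{E}$, and the action on the first factor. Axiom \eqref{prop1Diraceq} factorizes on simple tensors $\s\ot\xi$, $\tau\ot\eta$ as
\[
\lea X\pc(\s\ot\xi),\tau\ot\eta\ria = \lea X\pc\s,\tau\ria\lea\xi,\eta\ria,
\]
and vanishes in the symmetrized sum by \eqref{prop1Diraceq} on $S$; it extends to general sections by bilinearity. For \eqref{prop2Diraceq} one computes on simple tensors,
\[
\nb(a\pc(\s\ot\xi)) = \nb((a\pc\s)\ot\xi) = (\nb(a\pc\s))\ot\xi + (a\pc\s)\ot\nb\xi,
\]
and applies \eqref{prop2Diraceq} on $S$ to the first summand, then regroups to obtain $(\nb a)\pc(\s\ot\xi)+a\pc\nb(\s\ot\xi)$, finishing the proof.
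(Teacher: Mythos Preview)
Your proposal is correct and follows essentially the same approach as the paper: parts \eqref{Diracprop1} and \eqref{Diracprop2} are dismissed as routine, the dual case \eqref{Diracprop3} is handled by first checking that \eqref{DualCliffAction} gives a left module (via \eqref{invol2}), then establishing the skewness axiom through the identity $X\pc\s^{*}=-(X\pc\s)^{*}$ (your ``transpose of a skew operator is skew'' is exactly this), and finally verifying the Leibniz rule by evaluating on a test section and invoking \eqref{gammanabla}. The only cosmetic difference is that for \eqref{Diracprop4} the paper cites \cite{Lawson}, Proposition 5.10, whereas you supply the short direct computation.
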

\br 
 A Dirac subbundle of a Dirac bundle is any subbundle that satisfies the conditions in \eqref{Diracprop1}. Also, a Dirac bundle is irreducible if it has no nontrivial Dirac subbundles. 
 \er
\br If $E$ is itself a Dirac bundle, then $S\otimes E$ can be considered as a Dirac bundle in two distinct ways. This is the case, e.g., for the bundle ${\rm Hom}(S,S)$ of a Dirac bundle $S$.
\er

\begin{proof}[Proof of Proposition \ref{trivDirac}]
The proof of \eqref{Diracprop1} and \eqref{Diracprop2} is straightforward and \eqref{Diracprop4} is proved in \cite{Lawson} pp. 121-122 as Proposition 5.10 therein. To establish \eqref{Diracprop3}, observe that it is straightforward to verify that the action given by \eqref{DualCliffAction} induces a $\Cl-$module structure on $S^{*}$. Since $S$ is Dirac, \eqref{invol1}, \eqref{DualCliffAction}, and the definition of the metric dual yield that
\be\label{DualCliffiden}
X\pc \s^{*}= -(X\pc \s)^{*}
\ee
for $X\in T_pM$ and $\s\in S_p$. Now, \eqref{DualCliffiden} gives
\begin{align*}
\lea X\pc \s^{*}, \tau^{*}\ria &=-\lea (X\pc \s)^{*}, \tau^{*}\ria = -\lea X\pc \s, \tau\ria\\ &= \lea \s, X\pc \tau\ria = \lea \s^{*}, (X\pc \tau)^{*}\ria=-\lea \s^{*}, X\pc \tau^{*}\ria,
\end{align*}
 establishing \eqref{prop1Dirac} in Definition \ref{defDirac} for $S^*$.
 
On the other hand, the standard dual connection is given by
\be\label{dualconn}
(\nb_X \s^{*})(\tau)= X(\s^{*}(\tau))- \s^{*}(\nb_X\tau),
\ee
with $X\in T_pM$, $\tau\in \Gamma(S)$ and $\sigma^*\in\Gamma(S^*)$.  It follows from \eqref{DualCliffAction}, \eqref{dualconn} and \eqref{gammanabla} that
\begin{align*}
(\nb_X (a\pc \s^{*}))(\tau) &= X( (a\pc \s^{*})(\tau)) -(a\pc \s^{*})(\nb_X \tau)\\
&= X(\s^{*}(\ga(a)\pc \tau)) -\s^{*}(\ga(a)\pc \nb_X \tau)\\
&=X(\s^{*}(\ga(a)\pc \tau))+\s^{*}(\ga(\nb_X a)\pc \tau)\\ &\phantom{=}-\s^{*}(\ga(a)\pc\nb_X\s)-\s^{*}(\nb_X(\ga(a))\pc \tau)\\
&=\s^{*}(\ga(\nb_X a)\pc \tau)+ X(\s^{*}(\ga(a)\pc \tau)) -\s^{*}( \nb_X(\ga(a)\pc \tau))\\
&=\s^{*}(\ga(\nb_X a)\pc \tau)+ (\nb_X \s^{*})(\ga(a)\pc \tau))\\
&=((\nb_X a)\pc \s^{*}+a\pc\nb_X\s^{*})(\tau),
\end{align*}
establishing \eqref{prop2Dirac} in Definition \ref{defDirac} for $S^*$.  
\end{proof}
\bl\label{lemasub} The orthogonal complement of a Dirac subbundle is also a Dirac subbundle.
\el
\proof Let $E\subset S$ be a Dirac subbundle of $S$. The proof that $E^\perp$ is also parallel is straightforward.
To see that it is a $\Cl$-module as well, let $e_1,\ldots,e_n$ be a local orthonormal frame. By part \eqref{prop1Dirac} of Definition \ref{defDirac},
\[\lea e_{i_1}\cdot \ldots\cdot e_{i_k}\cdot\s, \tau \ria=(-1)^k\lea \sigma, \ga(e_{i_1}\cdot \ldots\cdot e_{i_k})\cdot\tau \ria,
\]
which implies that for any $a\in\Cl$,  $\s,\tau \in S$,  there exists $b\in\Cl$ such that 
\[
\lea a\cdot \s, \tau \ria=\lea \s, b\cdot \tau\ria.
\]
Letting $\s \in E^\perp$ and $\tau\in E$, shows that $a\cdot \s \in E^\perp$.
\endproof
\bp Dirac bundles of finite rank are the sum of irreducible Dirac bundles. 
\ep
\proof  The proof is by induction on the rank of $S$:  Let $E$ be a nontrivial irreducible Dirac subbundle of $S$, then by Lemma \ref{lemasub}, $E^{\perp}$ is a Dirac subbundle whose rank is strictly less than the rank of $S$.
\endproof
On a Dirac bundle, it is possible to define a Dirac operator imitating the classical definition of the Dirac operator of a spin bundle.  Consequently, the {\it Dirac operator} $D:\Gamma(S)\to \Gamma(S)$ of a  Dirac bundle is locally given by
\be
D\s = \sum_{i=1}^{n}e_{i}\pc \nb_{e_i}\s,
\ee
where $e_1,\ldots, e_n$ is a local orthonormal frame of $M$. It is known that $D$ and $D^2$ are elliptic operators. Observe that $D$ satisfies the following Leibniz rule:
\be\label{LeibD}
D(f\s)=\nb f\cdot \s+f D\s,
\ee
for $f$ and $\s$ smooth. The formula
\be\label{globalinnprod}
(\s, \tau) = \int_{M}\lea \s, \tau \ria
\ee
defines an inner product,\footnote{This and some subsequent constructions can be done on a euclidean vector bundle with a metric connection. Since the main purpose of this communication is to focus on Dirac bundles, we restrict our attention to them.} whose norm is denoted by $\|\cdot\|$, on the corresponding $L^2$-space. There, the Dirac operator $D$ is formally self-adjoint with respect to it, i.e.,
\be\label{Dselfad}
(D\s, \tau) = (\s, D\tau).
\ee
In particular, if $M$ is compact, $\ker D =\ker( D^2)$. An extension of this to complete riemannian manifolds is also known (see Theorem 5.7 in \cite{Lawson} p. 117). Here we present a different proof that is a consequence of the following result.
\begin{prop}\label{Friedineq}
Let $M$ be complete and let $S$ be a Dirac bundle over it.  Then, for each $\s\in \Ga(S)$ and $t>0$,
\be
\|D\s\|^2 \leq t\|D^2\s\|^2+\frac{1}{t}\|\s\|^2.
\ee
\end{prop}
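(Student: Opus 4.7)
The plan is to reduce the inequality to a one-line application of Cauchy--Schwarz and the arithmetic--geometric mean inequality on compactly supported sections, and then extend to the general case using the completeness hypothesis.

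For $\s\in\Ga_{\rm c}(S)$, the self-adjointness \eqref{Dselfad} gives
\[
\|D\s\|^2 = (D\s, D\s) = (\s, D^2\s) \leq \|\s\|\,\|D^2\s\|
\]
by Cauchy--Schwarz, and the arithmetic--geometric mean inequality $ab \leq \tfrac{1}{2t}a^2 + \tfrac{t}{2}b^2$ applied to $a = \|\s\|$ and $b = \|D^2\s\|$ then yields
\[
\|D\s\|^2 \leq \tfrac{1}{2t}\|\s\|^2 + \tfrac{t}{2}\|D^2\s\|^2 \leq \tfrac{1}{t}\|\s\|^2 + t\|D^2\s\|^2,
\]
as claimed, with some constant to spare.

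To handle an arbitrary $\s\in\Ga(S)$, I would argue that without loss of generality $\s$, $D\s$, and $D^2\s$ all lie in $L^2(S)$, since otherwise the right-hand side is already infinite. Fix $p_0\in M$ and, invoking completeness of $(M,g)$, construct Gaffney-type cutoffs $\phi_k\in C_{\rm c}^\infty(M)$ with $0\leq\phi_k\leq 1$, $\phi_k\equiv 1$ on the ball $B(p_0,k)$, and with both $\sup_M|\nb\phi_k|\to 0$ and $\sup_M|\nb^2\phi_k|\to 0$ as $k\to\infty$; such cutoffs are standard on a complete Riemannian manifold, obtained by smoothing the distance function. Apply the compactly supported case to $\phi_k\s$ and let $k\to\infty$: by \eqref{LeibD} together with the pointwise norm preservation \eqref{Cliffnorm}, the commutators $\|\phi_k\s-\s\|$, $\|D(\phi_k\s)-\phi_k D\s\|$ and $\|D^2(\phi_k\s)-\phi_k D^2\s\|$ all tend to zero by dominated convergence, and the desired inequality passes to the limit.

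The main obstacle is controlling the commutator $\|D^2(\phi_k\s)-\phi_k D^2\s\|$: after iterating \eqref{LeibD}, this expression involves $\nb\phi_k$ paired with $\nb\s$ (or $D\s$) and $\nb^2\phi_k$ paired with $\s$, so the argument requires cutoffs whose first \emph{and} second covariant derivatives decay uniformly. This is precisely where completeness of $M$ enters, and is the only nontrivial ingredient of the proof.
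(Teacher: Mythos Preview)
Your cutoff argument has a genuine gap. You assert that on a complete Riemannian manifold one can construct compactly supported $\phi_k$ with both $\sup_M|\nabla\phi_k|\to 0$ and $\sup_M|\nabla^2\phi_k|\to 0$, calling this ``standard''. It is not: controlling the Hessian of a cutoff built from (a smoothing of) the distance function requires bounds on the Hessian of the distance function, which by comparison geometry needs curvature hypotheses (a Ricci lower bound, or bounded sectional curvature). Completeness alone only delivers Gaffney cutoffs with $\sup_M|\nabla\phi_k|\to 0$. Since your control of $D^2(\phi_k\s)-\phi_k D^2\s$ relies explicitly on second-derivative decay, the argument fails on a general complete $M$. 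There is a second problem hiding in the same commutator: after iterating \eqref{LeibD} one also finds a term of the form $\nabla_{\nabla\phi_k}\s$, whose $L^2$ norm is bounded by $\sup|\nabla\phi_k|\,\|\nabla\s\|$. You have no a priori bound on $\|\nabla\s\|$, and your ``without loss of generality'' only yields $\s,D^2\s\in L^2$ from finiteness of the right-hand side; assuming $D\s\in L^2$ (let alone $\nabla\s\in L^2$) is essentially the content of the proposition.

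The paper's proof avoids both issues. Rather than apply the compactly supported inequality to $\phi_k\s$ and pass to the limit, it works directly with $\int|f_rD\s|^2$ and moves a single $D$ across using self-adjointness on $f_r^2D\s$. The resulting identity involves only \emph{first} derivatives of $f_r$, and the cross term $\int\lea f_rD\s,\,2\nabla f_r\cdot\s\ria$ is estimated by $\tfrac12\int|f_rD\s|^2+\tfrac{2C^2}{r^2}\int|\s|^2$, so the unknown quantity $\int|f_rD\s|^2$ is absorbed back into the left-hand side. This bootstrap trick is what simultaneously eliminates any need for Hessian control on the cutoff and any a priori integrability of $D\s$ or $\nabla\s$.
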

\begin{proof} (The arguments below follow closely those in \cite{Friedrich} pp. 96-98.) Observe that there is nothing to prove if $\|\s\|$ is infinite; therefore assume that $\|\s\|$ is finite. 

Since $M$ is complete, in particular it is metrically complete with respect to the shortest path metric $d$ and the metric balls,
\[
B_p(r)=\{q\in M : d(p,q)<r\}
\] 
for each $p\in M$ and $r>0$, are pre-compact.  

Fix a point $p_0\in M$ and let $B(r)$ denote $B_{p_0}(r)$. Using a standard argument of bump functions it is possible to define, for any $r>0$, a smooth function $f_r:M\to [0,1]$ such that $f_r\equiv 1$ on $B(r)$, ${\rm supp} f_r\subset B(2r)$, and 
\be\label{fr0}
|\nb(f_r)|^2\leq \frac{C^2}{r^2},
\ee
for a certain constant $C$ independent of $r$ (see \cite{Friedrich} pp. 95-96 for details on this construction). For any $\varepsilon>0$ and $\s\in\Ga(S)$, we have 
\begin{align*}
\int_{B(2r+\varepsilon)}|f_rD\s|^2&=\int_{B(2r+\varepsilon)}\left\lea D(f_r^2D\s),\s \right\ria\\
&=\int_{B(2r+\varepsilon)}\lea 2f_r\nb(f_r)\cdot D\s,\s \ria+\int_{B(2r+\varepsilon)}\lea f_r^2D^2\s,\s \ria.
\end{align*}
To show this, observe that in the first line, integration over $B(2r+\varepsilon)$ coincides with integration over the whole $M$, and thus we can use \eqref{Dselfad}. Using \eqref{prop1Dirac} of Definition \ref{defDirac} in the above expression and letting $\varepsilon$ go to zero yields

\be\label{fr1}
\int_{B(2r)}|f_rD\s|^2=\int_{B(2r)}\lea D^2\s,f_r^2\s \ria-\int_{B(2r)}\lea f_r D\s,2\nb(f_r)\cdot\s \ria.
\ee
The fact that, for any $t>0$,
\[
|\lea x,y\ria|\leq \frac t2 |x|^2+\frac1{2t}|y|^2
\]
allows us to estimate each of the right-hand-side terms of \eqref{fr1}. Indeed, from the definition of $f_r$, it yields
\be\label{fr2}
\left|\int_{B(2r)}\lea D^2\s,f_r^2\s \ria\right|\leq \frac t2 \int_{B(2r)} |D^2\s|^2+ \frac 1{2t} \int_{B(2r)} |\s|^2.
\ee
Also, for $t=1$, using \eqref{Cliffnorm} and \eqref{fr0}, 
\be\label{fr3}
\left|\int_{B(2r)}\lea f_r D\s,2\nb(f_r)\cdot\s \ria\right|\leq \frac12\int_{B(2r)}|f_rD\s|^2+\frac{2C^2}{r^2}\int_{B(2r)}|\s|^2.
\ee
Applying the triangle inequality to \eqref{fr1}, together with \eqref{fr2} and \eqref{fr3} ---and grouping like terms--- yields 
\[
\int_{B(2r)}|f_rD\s|^2\leq t\int_{B(2r)} |D^2\s|^2+\left(\frac1t+ \frac{4C^2}{r^2}\right)\int_{B(2r)} |\s|^2.
\]
Finally, since
\[
\int_{B(r)}|D\s|^2\leq \int_{B(2r)}|f_rD\s|^2,
\]
letting $r$ go to infinity in the previous two inequalities finishes the proof.
\end{proof}
\bc Let $M$ be complete. In the space of $L^2$-section over $S$,
\be\label{kerD}
\ker D =\ker( D^2).
\ee
\ec
\proof
One inclusion is evident. Assume $D^2\s=0$ for an $L^2$-section $\s$ of $S$. Elementary elliptic theory yields that $\s\in\Ga(S)$ (see \cite{Lawson} pp. 113 and 193 for details). 
 From Proposition \ref{Friedineq}, 
\[
\|D\s\|^2 \leq\frac{1}{t}\|\s\|^2,
\]
for any $t>0$. Therefore $D\s=0$.
\endproof
The {\it connection laplacian} $\nb^{*}\nb:\Ga(S)\to\Ga(S)$ is defined by 
\[
\nb^{*}\nb\s= -\tr(\nb^2\s), 
\]
for any $\s\in\Ga(S)$, where for each $X, Y\in\X(M)$ the operator $\nb^2_{X,Y}:\Ga(S)\to\Ga(S)$ is given by $\nb^2_{X,Y}\s=\nb_X\nb_Y\s - \nb_{\nb_XY}\s$ (notice that it is tensorial in $X$ and $Y$). The name {\it laplacian} is justified since it satisfies
\begin{equation}\label{CLapSelfad}
(\nb^{*}\nb \s, \tau)= (\nb \s, \nb\tau)
\end{equation}
for all $\s, \tau \in\Ga_{\rm c}(S)$; in the right-hand side, $\nb\s$ and $\nb\tau$ are regarded as sections of $T^*M\otimes S$.  In terms of a local orthonormal frame $e_1,\ldots, e_n$, 
\begin{equation}\label{defconlap}
\nb^{*}\nb\s  = - \sum_{j=1}^n \nb^2_{e_j,e_j}\s,
\end{equation}
and 
\be\label{innprodnabla} 
(\nb \s, \nb\tau) = \sum_{j} (\nb_{e_j}\s, \nb_{e_j}\tau).
\ee
From \eqref{CLapSelfad} it is evident that the connection laplacian is nonnegative and formally self-adjoint. Moreover, in the compact case \eqref{CLapSelfad} implies that $\nb^{*}\nb \s =0$ if and only if $\s$ is parallel (details in \cite{Lawson} pp. 154-155). 

The {\it Weitzenb\"ock curvature operator} of a Dirac bundle $S$ over $M$ is the linear operator $\Rc:S\to S$ defined for a local orthonormal frame $e_1,\ldots, e_n$ by the formula
\begin{equation}\label{defR}
\Rc\s = \sum_{i<j} e_i\cdot e_j\cdot R(e_i, e_j)\s.
\end{equation}
This operator is well-defined and satisfies the generalized Bochner identity (see \cite{Lawson}, p. 155),
\begin{equation}\label{GenBochnerId}
D^2 = \nb^{*}\nb + \Rc.
\end{equation}
Notice that it is evident from the Bochner identity that $\Rc$ is formally self-adjoint, since both $D$ and $\nb^*\nb$ are too (cf. \eqref{Dselfad} and \eqref{CLapSelfad}). 
For $\Rc$ even more is true, as the following proposition shows.
\bp\label{Rcptselfad} The Weitzenb\"ock operator of a Dirac bundle is pointwise self-adjoint.
\ep
 \proof  
Let $e_1,\ldots, e_n$ be a local orthonormal frame.  It is easy to verify that 
\[
R(e_i,e_j)(e_k\cdot\sigma)=e_k\cdot R(e_i,e_j)\sigma.
\]
Using this and the standard fact that  $\lea R(e_i,e_j)\s,\tau \ria=-\lea \s, R(e_i,e_j)\tau \ria$,
 \begin{align*} 
 \lea \Rc\s,\tau\ria &= \sum_{i<j} \lea e_i\cdot e_j\cdot R(e_i,e_j)\sigma,\tau \ria \\
 &=\sum_{i<j} \lea \s, R(e_i,e_j)(e_i\cdot e_j\cdot \tau) \ria \\
 &= \sum_{i<j} \lea \s, e_i\cdot e_j\cdot R(e_i,e_j)\tau \ria \\
 \pushQED{\qed} 
 &=  \lea\s, \Rc\tau\ria.\qedhere
\popQED
 \end{align*}
The next proposition provides specific formulas for the Weitzenb\"ock operator of the bundles described in Proposition \ref{trivDirac}. 
\bp\label{BDprops} Let $\Rc$ denote the Weitzenb\"ock operator on each of the following cases.
\begin{enumerate}[\indent\rm(i)]
\item\label{BD1} On a Dirac subbundle $\Rc$ is the restriction of the ambient Weitzenb\"ock operator. 
\item\label{BD2} On the Whitney sum of two Dirac bundles, 
\be\label{BDWhit}
 \Rc (\xi,\zeta)=(\Rc \xi, \Rc \zeta).
\ee
\item\label{BD3} On the dual of a Dirac bundle,  
\be\label{BDdual}
\Rc(\s^*)=(\Rc \s)^*.
\ee
\item\label{BD4} On a tensor product of a Dirac bundle and a euclidean bundle, locally, 
\be\label{BDTens}
\Rc(\s\otimes\xi)=\mathcal{R}\s\ot \xi + \sum_{i<j}e_i\cdot e_j\cdot \s\ot R(e_i,e_j)\xi,
\ee
where $e_1,\ldots,e_n$ is a local orthonormal frame.
\end{enumerate}
\ep
\begin{rk} The last term of the right-hand side of \eqref{BDTens} is already defined in \cite{Lawson} p. 164. It could be described invariantly as $\tr(\omega\mapsto \omega\cdot \s\otimes R(\omega)\xi)$, with $\omega\in\Lambda^2TM$.  Proposition \ref{TensorPos} below uses a real-valued version of this trace.
\end{rk}
\proof[Proof of Proposition \ref{BDprops}] Parts (i) and (ii) are immediate.  To see parts (iii) and (iv), let $e_1,\ldots,e_n$ be a local orthonormal frame. For the dual bundle,  let $\s, \eta \in S$ over the same point. Then, since $R(X,Y)(\s^*)=(R(X,Y)\s)^*$,
\begin{align*}
[\Rc (\s^*)](\eta)&=\sum_{i<j}e_i\cdot e_j \cdot R(e_i,e_j)(\s^*)(\eta)=\sum_{i<j}e_i\cdot e_j \cdot (R(e_i,e_j)\s)^*(\eta)\\
&=\sum_{i<j}\lea R(e_i,e_j)\s, \gamma(e_j)\cdot \gamma(e_i)\cdot \eta\ria=\sum_{i<j}\lea e_i\cdot e_j\cdot R(e_i,e_j)\s, \eta\ria\\
&=\lea \Rc \s, \eta\ria= (\Rc \s)^*(\eta).
\end{align*}
For the tensor product, let $\s, \xi \in S$ over the same point. Then
\begin{align*}
\mathcal{R}(\s\ot \xi) &=\sum_{i<j}e_i\cdot e_j\cdot R(e_i, e_j)(\s\ot \xi)\\
&= \sum_{i<j}e_i\cdot e_j\cdot (R(e_i, e_j)\s\ot \xi+ \s\ot R(e_i, e_j)\xi)\\
\pushQED{\qed} 
&=\mathcal{R}\s\ot \xi + \sum_{i<j}e_i\cdot e_j\cdot \s\ot R(e_i,e_j)\xi.\qedhere
\popQED
\end{align*} 
\section{Vanishing results}

The purpose of this section is to prove Theorem A as well as to establish some basic properties related to the positivity\footnote{Recall that a self-adjoint operator $\mathcal P$ is positive semi-definite (resp. definite) if for all $v$ (resp. for $v\neq0$) in its domain $\lea \mathcal P v, v\ria\geq0$ (resp. $\lea \mathcal P v, v\ria>0$).} of the Weitzenb\"ock operator.  As a general principle, Bochner-type identities gives rise to Bochner vanishing theorems, e.g. in Complex Geometry there is a Bochner identity relating the Chern connection and  the mean curvature for any holomorphic vector bundle. Such a result together with some conditions on the mean curvature implies the parallelism or nonexistence of holomorphic sections (see \cite{Kobayashi} ch. 3). In our context this is also the case: the Bochner identity \eqref{GenBochnerId} together with a positivity condition implies Theorem A, which is a particular type of Bochner vanishing theorem. 
\begin{theo}\label{TrivialKer} If the Weitzenb\"ock curvature operator $\Rc$ of a Dirac bundle over a compact manifold is positive semi-definite, then every section $\s$ of the kernel of the Dirac operator $D$ is parallel and satisfies 
\be\label{Rcperp}
\lea\Rc\s, \s\ria=0.
\ee
Furthermore, if $\Rc$ is positive definite at some point, then $D$ has trivial kernel. 
\end{theo}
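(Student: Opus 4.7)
The plan is to exploit the generalized Bochner identity \eqref{GenBochnerId}, namely $D^2=\nb^*\nb+\Rc$, together with the self-adjointness of the connection laplacian and the Clifford-compatibility that has already been laid out. The proof should be short and essentially a one-line consequence of integrating the identity.

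First I would let $\s\in\Ga(S)$ satisfy $D\s=0$. Since $M$ is compact, every section is compactly supported and thus admissible for the $L^2$ pairing \eqref{globalinnprod}. Applying the Bochner identity to $\s$ and pairing with $\s$ yields
\[
0=(D^2\s,\s)=(\nb^*\nb\s,\s)+(\Rc\s,\s).
\]
By \eqref{CLapSelfad} the first term equals $(\nb\s,\nb\s)=\|\nb\s\|^2\geq0$. The second term is $\int_M\lea\Rc\s,\s\ria$, which by the hypothesis that $\Rc$ is pointwise positive semi-definite is a nonnegative integrand. A sum of two nonnegative quantities that vanishes forces each to vanish: hence $\nb\s=0$ (so $\s$ is parallel) and the pointwise nonnegative function $\lea\Rc\s,\s\ria$ has zero integral, hence vanishes identically, giving \eqref{Rcperp}.

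For the last assertion, assume in addition that $\Rc$ is positive definite at some point $p\in M$. From the first part, any $\s\in\ker D$ is parallel and satisfies $\lea\Rc_p\s(p),\s(p)\ria=0$; positive definiteness at $p$ then forces $\s(p)=0$. Since a parallel section has constant pointwise norm on each connected component (by metric compatibility of $\nb$), the section $\s$ vanishes on the component of $p$; the same argument on any other component, using that $\Rc$ is at least positive semi-definite and $\s$ has constant norm there, together with an elementary continuity/connectedness consideration (or by assuming $M$ connected, which is the standard convention here), yields $\s\equiv0$.

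I do not expect any serious obstacle: every ingredient has already been established in the preceding section (the Bochner identity \eqref{GenBochnerId}, the self-adjointness relation \eqref{CLapSelfad}, and the fact that a parallel section is determined by its value at a single point via parallel transport). The only mildly delicate point is the handedness of the last step, where one must be a little careful that ``positive definite at some point'' suffices to kill a parallel section globally; this is precisely why the hypothesis of parallelism obtained in the first half is exactly what is needed.
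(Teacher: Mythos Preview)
Your proof is correct and follows essentially the same approach as the paper: integrate the Bochner identity \eqref{GenBochnerId} against $\s$, use \eqref{CLapSelfad} to recognize $\|\nb\s\|^2$, and conclude from the vanishing of a sum of nonnegative terms. The only cosmetic difference is in the second part: the paper passes to a neighborhood where $\Rc$ is positive definite and argues via the integral $(\Rc\s,\s)$, whereas you work directly at the single point $p$ using $\lea\Rc_p\s(p),\s(p)\ria=0$ and parallelism; your version is if anything slightly cleaner, and both tacitly assume $M$ connected (which you flag explicitly).
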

\begin{proof}  To see the first claim, assume $\Rc$ is positive semi-definite and let $\s$ be a section in the kernel of $D$. Then $\lea\Rc\s, \s\ria\geq0$ and by \eqref{CLapSelfad} and \eqref{GenBochnerId},
\[
0=(D^2\s,\s)=(\nb^{*}\nb \s, \s) +(\Rc \s,\s)=\|\nb \s\|^2+ (\Rc \s,\s).
\]
Being nonnegative, the two terms on the right-hand side of the last equation must vanish, therefore $\s$ is parallel and satisfies \eqref{Rcperp}.

Furthermore, assume $\Rc$ is also positive definite at a point. Therefore it is positive definite in a neighborhood and $(\Rc\s,\s)>0$ for any $\s$ that doesn't vanish identically on that neighborhood. Now let $\s$ be a section in the kernel of $D$.  By the first part it is parallel and $(\Rc\s, \s)=0$. Being parallel, if $\s$ is nonzero at a point, it is nonzero everywhere. Thus, $\s$ is identically zero.
\end{proof}
Next we study some further properties of the Weitzenb\"ock operator and establish conditions under which its positivity is preserved under the bundle constructions considered in Proposition \ref{trivDirac}. The Weitzenb\"ock operator on any Dirac subbundle inherits any positivity from the ambient. For the Whitney sum and the dual we have the following results. 
\bp
The Weitzenb\"ock operator on the Whitney sum of Dirac bundles is positive semi-definite (resp. definite) if and only if the Weitzenb\"ock operator on each summand is positive semi-definite (resp. definite). 
\ep
\proof Both directions follow readily from \eqref{BDWhit}.\endproof
\bp The Weitzenb\"ock operator on a Dirac bundle is positive semi-definite (resp. definite) if and only if the Weitzenb\"ock operator on its dual is positive semi-definite (resp. definite). 
\ep
\proof Both directions follow readily from \eqref{BDdual}.\endproof
The case of the tensor product requires a more detailed analysis.
\bl\label{Rgaritenslema}
Let $S$ and $E$ be euclidean bundles over $M$ and assume further that $S$ is Dirac. The Weitzenb\"ock operator on $S\ot E$ satisfies that
\be\label{Rgaritensor}
\lea \mathcal{R}(\s\ot \xi), \s\ot \xi \ria =\lea\Rc \s, \s \ria \lea \xi, \xi\ria
\ee
for any $\s\in S$ and $\xi\in E$ over the same point.
\el
\begin{proof}
From \eqref{BDTens} it follows that
\begin{align}
\lea \mathcal{R}(\s\ot \xi), \tau\ot \zeta \ria &= \lea \Rc \s\ot \xi, \tau\ot \zeta\ria + \sum_{i<j} \left\lea e_i\cdot e_j\cdot \s\ot R(e_i,e_j)\xi, \tau\ot \zeta \right\ria\nonumber\\
&= \lea\Rc \s, \tau \ria \lea \xi, \zeta\ria + \sum_{i<j} \lea e_i\cdot e_j\cdot \s,\tau\ria \lea R(e_i,e_j)\xi , \zeta\ria.\label{Rctensorprop}
\end{align}
Taking $\tau\ot \zeta=\s\ot \xi$ finishes the proof.
\end{proof}
\bp
Let $S$ and $E$ be euclidean bundles over $M$ and assume further that $S$ is Dirac. The  Weitzenb\"ock operator on $S$ is positive semi-definite (resp. definite) if the Weitzenb\"ock operator on $S\ot E$ is also positive semi-definite (resp. definite). 
\ep
\proof
If the Weitzenb\"ock operator on $S \ot E$ is positive semi-definite then $\lea\Rc\eta, \eta\ria\geq0$ (resp. $\lea\Rc\eta, \eta\ria>0$) for any $\eta\in S\ot E$ (resp. for $\eta\neq0$). In particular, by letting $\eta=\sigma\ot \xi$ the result now follows from \eqref{Rgaritensor}.
\endproof
It is important to mention that in other contexts these types of definiteness are inherited in tensor products (see for instance \cite{Kobayashi} p. 53); to accomplish that, both factors must possess such property. For Dirac bundles, even though the Dirac structure on $S\ot E$ is defined only using the Dirac structure of $S$, \eqref{Rctensorprop} shows that the Weitzenb\"ock operator on $S\ot E$ depends on the geometry of $E$.  A condition for such converse to hold is now given. 
\bp\label{TensorPos} Let $S$ be a Dirac bundle with positive Weitzenb\"ock operator and let $E$ be any euclidean bundle. Let $\Theta$ be the bilinear form on $S\otimes E$ determined by 
\be\label{Tensorform}
\Theta(\s\otimes\xi, \tau\otimes\zeta)=\tr\bigg(\omega\mapsto \lea \omega\cdot \s,\tau\ria \lea R(\omega)\xi , \zeta\ria\bigg).
\ee
The Weitzenb\"ock operator on $S\otimes E$ is positive semi-definite if and only if
\be\label{posDir}
\lea\Rc \s, \s \ria\lea\Rc \tau, \tau \ria|\xi|^2|\zeta|^2-\lea\Rc \s, \tau \ria^2\lea \xi, \zeta \ria^2-2\lea\Rc \s, \tau \ria\Theta-\Theta^2\geq0,
\ee
for all $\s\otimes\xi, \tau\otimes\zeta\in S\otimes E$ over the same point. Furthermore, it is positive definite if the inequality is strict for nonzero vectors. 
\ep
\proof It follows from \eqref{Rctensorprop} and \eqref{Tensorform} that 
\[
\lea \mathcal{R}(\s\ot \xi),  \tau\otimes\zeta \ria =\lea\Rc \s, \tau \ria \lea \xi, \zeta \ria+\Theta(\s\otimes\xi, \tau\otimes\zeta).
\]
Notice that $\Theta(\eta,\eta)=0$ for any $\eta\in S\ot E$ (cf. \eqref{Rctensorprop} in Lemma \ref{Rgaritenslema}). For linearly independent $\s\otimes\xi$ and $\tau\otimes\zeta$ in $S\otimes E$,
\[
\begin{bmatrix}
\lea\Rc \s, \s \ria |\xi|^2& \lea\Rc \s, \tau \ria\lea \xi, \zeta \ria+\Theta\\
\lea\Rc \s, \tau \ria\lea \xi, \zeta \ria+\Theta& \lea\Rc \tau, \tau \ria |\zeta|^2
\end{bmatrix}
\]
is the matrix associated to $\Rc$ restricted to the subspace generated by $\s\otimes\xi$ and $\tau\otimes\zeta$.
Observe that the left-hand side of \eqref{posDir} is nothing but the determinant of this matrix; from which the conclusion follows.
\endproof
An immediate consequence is the following result.
\bp Let $S$ be a Dirac bundle with positive semi-definite (resp. definite) Weitzenb\"ock operator and let $E$ be a euclidean bundle with connection.  If $\Theta\equiv0$ then the Weitzenb\"ock operator on $S\otimes E$ is positive semi-definite (resp. definite).
\ep
\proof If $\Theta\equiv0$, then all that is left to verify \eqref{posDir} is that  
\[\lea\Rc \s, \s \ria\lea\Rc \tau, \tau \ria|\xi|^2|\zeta|^2\geq\lea\Rc \s, \tau \ria^2\lea \xi, \zeta \ria^2.
\]
Using Cauchy-Schwarz, it is sufficient to verify that
\[\lea\Rc \s, \s \ria\lea\Rc \tau, \tau \ria\geq\lea\Rc \s, \tau \ria^2.
\]
And this holds since $\Rc$ is positive semi-definite (resp. definite) on $S$.
\endproof
\bc For $E$ flat, the Weitzenb\"ock operator of $S\ot E$ is positive semi-def\-i\-nite (resp. definite) if and only if the Weitzenb\"ock operator of $S$ is positive semi-def\-i\-nite (resp. definite). 
\ec
\proof This is immediate from the fact that $\Theta$ vanishes identically in this case. 
\endproof


\section{Twistor sections}
The purpose of this section is to prove Theorem B and Theorem C.  To do this, we first introduce the twistor and Killing equations for sections of Dirac bundles and we establish some natural extensions of well-known facts in Spin Geometry.

\bd[{\bf Twistor section}\hspace{-4pt} ]\label{twistordef} A section $\s$ of a Dirac bundle $S$ is a twistor section if it satisfies 
 \begin{equation}\label{twistorspi}
\nb_X\s + \frac{1}{n}X\cdot D\s =0,
\end{equation}
for all $X\in TM$.  
\ed
\bd[{\bf Killing section}\hspace{-4pt} ]\label{twistordef} A section $\s$ of a Dirac bundle $S$ is a Killing section with constant $\la$ if it satisfies 
\begin{equation}\label{Killingspi}
\nb_X\s = \la X\cdot \s
\end{equation}
for all $X\in TM$.  
\ed
Equations \eqref{twistorspi} and \eqref{Killingspi} are respectively the twistor and Killing equations on $S$. 

The following result is a natural extension to Dirac bundles of a result for spin bundles (see the proposition in \cite{Friedrich} p. 117). 
\bp Let $\s$ be a Killing section with constant $\la$ of a Dirac bundle $S$ over an $n$-dimensional $M$. 
\begin{enumerate}[\indent\rm(i)]
\item If $M$ is connected, then $\s$ vanishes identically if it vanishes at a point. 
\item The section $\s$ is also twistor. Moreover, it is an eigensection of the Dirac operator with eigenvalue $-n\lambda$.
\end{enumerate}
\ep
\proof
The proof is straightforward and it is essentially identical to the proof of the proposition in \cite{Friedrich} p. 117. The difference is that therein a hermitian inner product is used.\footnote{A result corresponding to the third part of that proposition is not included here  since in the euclidean case $\lea e_i\cdot \s,\s\ria=0$, and thus the vector field  $\sum\lea e_i\cdot \s,\s\ria e_i$  vanishes identically.}
\endproof
In the case of spin bundles, the classical Bochner identity implies explicit conditions for the twistor equation to be satisfied. These formulas relate the Dirac operator and the curvature to a solution of the twistor equation. In the Dirac bundle case, the following result shows that there also exist explicit conditions linking the generalized Bochner identity \eqref{GenBochnerId} and the twistor equation \eqref{twistorspi}. It is a generalization of Proposition 2 in \cite{Lich2} p. 335 and of the first two formulas of Theorem 3 in \cite{BFGK} p. 24. In analogy to the expressions found in the latter, we define the Ricci curvature operator $\Ric_X:S\to S$  of a Dirac bundle $S$  by 
$$\Ric_X\s = 2\sum_{i=1}^{n}e_i\pc R(e_i,X)\s,$$
where $e_1,\ldots,e_n$ is a local orthonormal frame. 

\begin{theo}\label{twistorchar}
Let $S$ be a Dirac bundle over a compact $M$. A section $\s$ of $S$ is a twistor section if and only if
\begin{equation}\label{twistorequiv}
D^2\s = \frac{n}{n-1}\Rc\s.
\end{equation}
Moreover, in this case,
\begin{equation}\label{eqtRic}
\nb_XD\s = \frac{n}{n-2}\left[\frac{1}{n-1}X\pc \Rc-\frac{1}{2}\Ric_X\right]\s.
\end{equation}
\end{theo}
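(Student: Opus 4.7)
\medskip
\noindent\textbf{Proof proposal.} I would attack the two directions of the equivalence by combining the twistor operator with the generalized Bochner identity \eqref{GenBochnerId}, and then derive the second formula by differentiating the twistor equation.

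\smallskip
For the implication \emph{twistor $\Rightarrow$ $D^2\s=\frac{n}{n-1}\Rc\s$}, I would work pointwise in a normal orthonormal frame $e_1,\dots,e_n$ at $p\in M$, so $\nb_{e_j}e_j=0$ at $p$. Substituting the twistor equation $\nb_{e_j}\s=-\frac{1}{n}e_j\cdot D\s$ into \eqref{defconlap} and using \eqref{prop2Dirac} gives, at $p$,
\[
\nb^*\nb\,\s=-\sum_j\nb_{e_j}\!\left(-\tfrac{1}{n}e_j\cdot D\s\right)=\tfrac{1}{n}\sum_j e_j\cdot \nb_{e_j}D\s=\tfrac{1}{n}D^2\s,
\]
and then \eqref{GenBochnerId} immediately yields $(1-\tfrac{1}{n})D^2\s=\Rc\s$, i.e.\ \eqref{twistorequiv}.

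\smallskip
For the converse, introduce the \emph{twistor operator} $P_X\s:=\nb_X\s+\frac{1}{n}X\cdot D\s$ and compute $\sum_i|P_{e_i}\s|^2$. Expanding the square and using \eqref{prop1Diraceq} to rewrite $\lea \nb_{e_i}\s, e_i\cdot D\s\ria=-\lea e_i\cdot \nb_{e_i}\s, D\s\ria$ (so the cross-term sums to $-|D\s|^2$) and \eqref{Cliffnorm} for the last term, one gets
\[
\sum_i|P_{e_i}\s|^2=|\nb\s|^2-\tfrac{1}{n}|D\s|^2.
\]
Integrating and applying Bochner in the form $\|\nb\s\|^2=\|D\s\|^2-(\Rc\s,\s)$ collapses this to $\tfrac{n-1}{n}\|D\s\|^2-(\Rc\s,\s)$. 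The hypothesis $D^2\s=\frac{n}{n-1}\Rc\s$ together with \eqref{Dselfad} gives $\|D\s\|^2=\frac{n}{n-1}(\Rc\s,\s)$, so the integral vanishes. Since the integrand is non-negative and $M$ is compact, $P_X\s\equiv 0$, i.e.\ $\s$ is a twistor section. The main obstacle here is arranging the cross-term correctly; it is really just bookkeeping with \eqref{prop1Diraceq}.

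\smallskip
For \eqref{eqtRic}, I would differentiate the twistor equation to extract the curvature. Applying $\nb_X$ to $\nb_Y\s=-\tfrac{1}{n}Y\cdot D\s$ and subtracting $\nb_{\nb_XY}\s$ gives
\[
\nb^2_{X,Y}\s=-\tfrac{1}{n}Y\cdot \nb_X D\s,
\]
whence antisymmetrizing produces $X\cdot\nb_Y D\s - Y\cdot\nb_X D\s = n\,R(X,Y)\s$. Setting $Y=e_i$, multiplying by $e_i$ on the left, and summing, the Clifford relations $\sum_i e_i\cdot e_i=-n$ and $e_i\cdot X=-X\cdot e_i-2\lea e_i,X\ria$ convert the left-hand side into $(n-2)\nb_X D\s - X\cdot D^2\s$, while the right-hand side becomes $-\tfrac{n}{2}\Ric_X\s$ by the definition of $\Ric_X$. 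Solving for $\nb_X D\s$ and substituting the already established \eqref{twistorequiv} yields \eqref{eqtRic}.
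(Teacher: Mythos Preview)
Your proof is correct and follows essentially the same route as the paper: the forward implication via $\nb^*\nb\s=\tfrac1n D^2\s$ and Bochner, the converse via the nonnegative integrand $\sum_i|P_{e_i}\s|^2$ shown to have vanishing integral, and \eqref{eqtRic} by differentiating the twistor equation, extracting $R(X,Y)\s=\tfrac1n(X\cdot\nb_YD\s-Y\cdot\nb_XD\s)$, and contracting with $e_i\cdot$. The only cosmetic differences are that the paper carries out the converse computation entirely in $L^2$ norms rather than first isolating the pointwise identity $|\nb\s|^2-\tfrac1n|D\s|^2$, and derives the curvature identity via commuting vector fields instead of the tensorial $\nb^2_{X,Y}$; neither changes the substance.
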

\begin{proof}
Let $\s$ be a twistor section of $S$ and let $e_1,\ldots, e_n$ be a local orthonormal frame.  Then from \eqref{defconlap} and\eqref{twistorspi},
\begin{align*}
\nb^{*}\nb\s &= -\sum_{j=1}^n\nb^2_{e_j,e_j}\s =-\sum_{j=1}^{n}\left(\nb_{e_j}\nb_{e_j}\s-\nb_{\nb_{e_j}e_j}\s \right)\\
&= \frac{1}{n}\sum_{j=1}^{n}\left(\nb_{e_j}(e_j\pc D\s)- \nb_{e_j}e_j\pc D\s\right)=\frac{1}{n}D^2\s.
\end{align*}
At this point $\eqref{twistorequiv}$ follows from the general Bochner identity \eqref{GenBochnerId}. 

Conversely, if $\s\in\Ga(S)$ satisfies \eqref{twistorequiv} then,  again using \eqref{GenBochnerId}, $\s$ satisfies 
\[
\nb^{*}\nb\s =\frac{1}{n}D^2\s.
\] 
Using this, \eqref{Dselfad}, \eqref{CLapSelfad} and \eqref{innprodnabla}, it follows that

\begin{align*}
\sum_{j=1}^{n}\|\nb_{e_j}\s+\frac{1}{n}e_j\pc D\s\|^2
&= \sum_{j=1}^{n}\Big\{\|\nb_{e_j}\s\|^2+\frac{2}{n}(e_j\pc D\s, \nb_{e_j}\s)+\frac{1}{n}\|D\s\|^2\Big\}\\
&= (\nb^{*}\nb\s, \s) - \frac{2}{n}(D\s, D\s)+\frac{1}{n}(D\s, D\s)\\
&= (\nb^{*}\nb\s, \s) - \frac{1}{n}(D^2\s,\s)=0.
\end{align*}
So that $\nb_{e_j}\s+\frac{1}{n}e_j\pc D\s =0$ for $j=1,\ldots, n$; from which $\eqref{twistorspi}$ follows.

To prove \eqref{eqtRic} let $\s$ be a twistor section, and $X, Y\in TM$ over the same point.  Then,
\be\label{Riemtwist}
R(X,Y)\s=\frac1n\big(X\cdot\nb_YD\s-Y\cdot \nb_XD\s\big).
\ee
Indeed, for  two commuting vector fields $X,Y\in\X(M)$,  
\begin{align*}
R(X,Y)\s&=\nb_X\nb_Y\s- \nb_Y\nb_X\s\\
&=\frac1n\big(\nb_Y(X\cdot D\s ) -\nb_X(Y\cdot D\s )\big)\\
&=\frac1n\big(X\cdot\nb_YD\s-Y\cdot \nb_XD\s\big).
\end{align*}
Since this is tensorial \eqref{Riemtwist} holds. Lastly, using \eqref{twistorequiv} and \eqref{Riemtwist},
\begin{align*}
\frac12\Ric_X\s&=\sum_{i=1}^n e_i\cdot R(e_i,X)\s\\
&=\frac1n\sum_{i=1}^n\big(e_i\cdot e_i\cdot\nb_XD\s-e_i\cdot X\cdot \nb_{e_i}D\s\big)\\
&=-\nb_XD\s+\frac1nX\cdot D^2\s+\frac{2}{n}\sum_{i=1}^n \lea e_i,X\ria\nb_{e_i}D\s\\
&=-\nb_XD\s+\frac{1}{n-1}X\cdot\Rc\s+\frac2n\nb_XD\s\\
&=\frac{2-n}{n}\nb_XD\s+\frac{1}{n-1}X\cdot\Rc\s,
\end{align*}
from which \eqref{eqtRic} follows.
\end{proof}
Consider for each $X\in\X(M)$, the endomorphism $\K_X:S\to S$ given by
\be
\K_X=\frac{n}{n-2}\left[\frac{1}{n-1}L_X\circ\Rc-\frac{1}{2}\Ric_X\right],
\ee
where $L_X:S\to S$ is the Clifford multiplication by $X$. By \eqref{twistorspi} and \eqref{eqtRic}, 

\be\label{paratwist}
\nb_X\s=-\frac1nL_X\circ D\s\hspace{2cm} \nb_X D\s= \K_X\s.
\ee
which suggests the following result (cf. \cite{BFGK} p. 26).
\bt\label{parallel}
On $E=S\oplus S$, with respect to the connection $\nb^E:\Ga(E)\to\Ga(T^*M\otimes E)$ given by the matrix
\[
\begin{pmatrix} \nb& \frac1nL\\
-\K &\nb
\end{pmatrix}
\]
a section $\s$ is a twistor section if and only if $(\s, D\s)$ is $\nb^E$-parallel. 
\et
\proof
That for a twistor section $\s$, $\nb^E(\s,D\s)=0$ is immediate from \eqref{paratwist}. Conversely, let $(\s,\tau)$ be $\nb^E$-parallel. Then $\nb_X\s=-\frac1n X\cdot\tau$ holds true for all $X\in TM$ and thus
\[
D\s=\sum_{i=1}^ne_i\cdot \nb_{e_i}\s=\sum_{i=1}^ne_i\cdot (-\frac1n e_i\cdot \tau)=\tau,
\]
which establishes the claim.
\endproof

\bc
A twistor section $\s$ is determined by $\s_{p_0}$ and $D\s_{p_0}$ for a single $p_0$.
\ec
\bc 
The space of solutions to the twistor equation is finite-dimensional and its dimension is bounded by $2\,\r S$.
\ec
\bc  If for a twistor section $\s$, both $\s$ and $D\s$ vanish simultaneously at a point, then $\s$ vanishes identically.
\ec
The form of \eqref{Killingspi}, as well as Theorem \ref{parallel}, suggest the use of modified connections (cf. \cite{Friedrich} pp. 114-115) such as the one in the following technical result. 
\bl Let $f:M\to \R$ be any smooth function and let $\nb^f$ be the covariant derivative on S given by
\be
\nb^{f}_X\s = \nb_X\s + fX\cdot \s.
\ee
Then $\nb^f$ is metric,
\be\label{nbflaplacian}
\nb^{f*}\nb^f \s = \nb^{*}\nb\s - \nb f\cdot \s -2fD\s +nf^2\s,
\ee
and
\be\label{eqDf}
(D-f)^2\s =\nb^{f*}\nb^f\s + \Rc\s + (1-n)f^2\s.
\ee
\el
\proof To see that $\nb^f$ is metric, notice that
\begin{align*}
\lea\nb_X^f\s,\tau \ria+ \lea\s,\nb_X^f\tau \ria&=\lea\nb_X\s,\tau \ria+ \lea\s,\nb_X\tau \ria.
\end{align*}
Now, let $e_1,\ldots,e_n$ be a local orthonormal frame.  Then
\begin{align*} 
\nb^f_{e_j}\nb^f_{e_j}\s&=\nb_{e_j}\nb_{e_j} \s + \nb_{e_j}( fe_j\cdot\s)+fe_j\cdot\nb_{e_j}\s+ f e_j\cdot(fe_j\s)\\
 &=\nb_{e_j}\nb_{e_j} \s +e_j(f)e_j\cdot \s + f\nb_{e_j}e_j\cdot\s +2fe_j\cdot\nb_{e_j}\s -f^2\s,
 \end{align*}
and
\begin{align*} 
\nb^f_{\nb_{ e_j}e_j}\s &=\nb_{\nb_{ e_j}e_j}\s+f\nb_{ e_j}e_j\cdot\s.
\end{align*}
From these two expressions, it follows that
\begin{align*} 
\nb^{f*}\nb^f\s &=-\sum_{j=1}^{n}[\nb^{f}]^2_{e_j,e_j}\s\\
 & =-\sum_{j=1}^{n}( \nb^f_{e_j}\nb^f_{e_j}\s-\nb^f_{\nb_{ e_j}e_j}\s) \\
 &= -\sum_{j=1}^{n} (\nb^2_{e_j,e_j} \s +e_j(f)e_j\cdot \s +2fe_j\cdot\nb_{e_j}\s -f^2\s )\\
 &= \nb^{*}\nb\s - \nb f\cdot \s -2fD\s +nf^2\s,
\end{align*}
which establishes \eqref{nbflaplacian}. To see \eqref{eqDf}, use \eqref{LeibD} and the general Bochner identity, 
\begin{align*}
(D-f)^2\s &= D^2\s -2fD\s - \nb f\cdot\s +f^2\s \\
&=\nb^{*}\nb\s+\Rc\s-2fD\s-\nb f\cdot\s+f^2\s \\
\pushQED{\qed} 
&= \nb^{f*}\nb^f\s + \Rc\s + (1-n)f^2\s.\qedhere
\popQED
\end{align*}
The following result characterizes Killing sections on Dirac bundles over compact manifolds.
\begin{theo}\label{equiv1}
On a Dirac bundle $S$ over a compact $n$-dimensional $M$ the following are equivalent.
\begin{enumerate}[\indent\rm(i)]
\item A section $\s$ of $S$ is a Killing section with constant $\la$.
\item A section $\s$ of $S$ satisfies $D\s=-n\la\s$ and $\Rc\s = \la^2n(n-1)\s$.
\end{enumerate}
\end{theo}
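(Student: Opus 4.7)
I would prove the two implications separately, with the forward direction being essentially a direct computation and the reverse direction exploiting the modified connection $\nb^{f}$ together with identity \eqref{eqDf}.

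For (i)$\Rightarrow$(ii), I would first compute $D\s$ directly from the Killing condition by choosing a local orthonormal frame $e_1,\ldots,e_n$ and using $e_i\pc e_i=-1$:
\[
D\s=\sum_{i=1}^n e_i\pc\nb_{e_i}\s=\la\sum_{i=1}^n e_i\pc e_i\pc\s=-n\la\s.
\]
Since every Killing section is twistor, Theorem \ref{twistorchar} gives $D^2\s=\frac{n}{n-1}\Rc\s$; but $D^2\s=-n\la D\s=n^2\la^2\s$, whence $\Rc\s=\la^2n(n-1)\s$.

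For (ii)$\Rightarrow$(i), the idea is to show that the modified covariant derivative $\nb^{-\la}$ annihilates $\s$, since $\nb^{-\la}_X\s=\nb_X\s-\la X\pc\s$ is precisely the defect from being Killing. Apply \eqref{eqDf} with the constant $f=-\la$ (so $\nb f=0$):
\[
(D+\la)^2\s=\nb^{-\la *}\nb^{-\la}\s+\Rc\s+(1-n)\la^2\s.
\]
The hypothesis $D\s=-n\la\s$ yields $(D+\la)^2\s=(n-1)^2\la^2\s$, and the hypothesis $\Rc\s=n(n-1)\la^2\s$ then forces
\[
\nb^{-\la *}\nb^{-\la}\s=\big[(n-1)^2-n(n-1)-(1-n)\big]\la^2\s=0.
\]
At this point I would invoke compactness: since $\nb^{-\la}$ is metric (as established in the lemma just before the theorem), integration by parts gives $(\nb^{-\la*}\nb^{-\la}\s,\s)=\|\nb^{-\la}\s\|^2$, and therefore $\nb^{-\la}\s=0$ identically, which is the Killing condition.

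The main technical point I expect to verify carefully is the arithmetic collapse $(n-1)^2-n(n-1)-(1-n)=0$ and the validity of the integration-by-parts identity for $\nb^{f*}\nb^f$; the latter follows from the same argument that yielded \eqref{CLapSelfad}, applied to the metric connection $\nb^{f}$ rather than $\nb$, so no new analytic input beyond compactness is required. Aside from this, everything is an application of identities already established in the excerpt, and the choice $f=-\la$ is dictated by the form of the Killing equation.
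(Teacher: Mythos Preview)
Your proposal is correct and follows essentially the same approach as the paper's proof: the forward direction computes $D\s$ directly and then invokes \eqref{twistorequiv}, while the reverse direction applies \eqref{eqDf} with $f=-\la$, uses the two hypotheses to kill the right-hand side, and appeals to compactness plus the metricity of $\nb^{-\la}$ to conclude $\nb^{-\la}\s=0$. The arithmetic collapse you flag is exactly the computation the paper leaves implicit.
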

\begin{proof}
Let $\s$ be as in (i), then 
\[
D\,\s = \sum_{i=1}^{n}e_i\cdot \nb_{e_i} \s = \la \sum_{i=1}^{n}e_i\cdot e_i\cdot \s = -n\la\, \s.
\]
Thus $D^2\s=n^2\la^2\s$, which together with  \eqref{twistorequiv} establishes the remaining formula in (ii).

Conversely, let $\s$ be as in (ii) and use  \eqref{eqDf} with $f=-\la$ to obtain 
\[
(D+\la)^2\s = \nb^{-\la*}\nb^{-\la}\s + \Rc\s + (1-n)\la^2\s.
\]
Using both conditions of (ii) in this last formula immediately yields that
\[
\nb^{-\la*}\nb^{-\la}\s=0.
\]
Recall that the notion of connection laplacian can be defined on euclidean vector bundles and that properties such as \eqref{CLapSelfad} still hold. In particular, this is the case of $S$ with metric connection $\nb^{-\la}$.  Therefore, since $M$ is compact it follows that $\s$ is $\nb^{-\la}$-parallel.  This is exactly the Killing condition \eqref{Killingspi} for constant $\la$.
\end{proof}
To end this section we provide another characterization of the Killing condition in terms of a lower bound for the square of the eigenvalues of the Dirac operator. 
\begin{prop}
Let $S$ be  a Dirac bundle over a compact $M$ and let $\mu$ be an eigenvalue of $D$, then
\be\label{Dlambda}
\mu^2 \geq \frac{n}{n-1}R_0,
\ee
where 
\[
R_0=\min_{|\s|=1}\lea\Rc\s,\s \ria.
\]
Moreover, the equality holds if and only if $\s$ is a Killing section with constant $\mp \sqrt{\frac{R_0}{n(n-1)}}$ and $\Rc$ is positive semi-definite.
\end{prop}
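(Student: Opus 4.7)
The plan is to follow a Friedrich-type argument based on the modified-connection identity \eqref{eqDf} from the preceding lemma. Let $\s$ satisfy $D\s=\mu\s$. I would apply \eqref{eqDf} with $f=-\la$, where $\la\in\R$ is a free real parameter to be optimized later, take the $L^2$-pairing with $\s$ over the compact $M$, and use the $L^2$ self-adjointness of $\nb^{-\la*}\nb^{-\la}$ (the analogue of \eqref{CLapSelfad}), of $\Rc$, and of $D+\la$. Since $(D+\la)^2\s=(\mu+\la)^2\s$, this yields the $L^2$-identity
\[
(\mu+\la)^2\|\s\|^2 = \|\nb^{-\la}\s\|^2 + (\Rc\s,\s) + (1-n)\la^2\|\s\|^2.
\]

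Using $\|\nb^{-\la}\s\|^2\ge 0$ and $(\Rc\s,\s)\ge R_0\|\s\|^2$, this identity reduces, after dividing by $\|\s\|^2$, to the scalar inequality $\mu^2 + 2\mu\la + n\la^2 \ge R_0$, valid for every $\la\in\R$. Minimizing the left-hand side (the minimum is attained at $\la=-\mu/n$ by completing the square) produces exactly $\tfrac{n-1}{n}\mu^2\ge R_0$, i.e., \eqref{Dlambda}. This choice of parameter is the essential conceptual step; everything else is bookkeeping.

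For the equality statement, I would trace back with the optimal $\la=-\mu/n$. Equality in \eqref{Dlambda} forces both non-negative quantities on the right-hand side to vanish in the combined sense: namely $\|\nb^{-\la}\s\|=0$ and $(\Rc\s,\s)=R_0\|\s\|^2$. The first, upon comparing with \eqref{Killingspi}, is precisely the Killing condition with constant $\la=-\mu/n$; by Theorem \ref{equiv1} this yields $\Rc\s=n(n-1)\la^2\s$, so $R_0=n(n-1)\la^2\ge 0$, which both identifies $\la=\mp\sqrt{R_0/(n(n-1))}$ (with the sign opposite to that of $\mu$) and shows $\Rc$ is positive semi-definite. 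The converse is immediate: if $\s$ is Killing with constant $\la=\mp\sqrt{R_0/(n(n-1))}$, Theorem \ref{equiv1} gives $D\s=-n\la\s$, whence $\mu^2=n^2\la^2=\tfrac{n}{n-1}R_0$.

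I do not expect any serious obstacle. The only delicate items are keeping the signs straight between the modified connection $\nb^{-\la}$, the Killing constant in \eqref{Killingspi}, and the $\mp$ in the statement; and recognizing that the integrated equality $(\Rc\s,\s)=R_0\|\s\|^2$ need not be handled pointwise directly, since once the Killing condition is in hand, Theorem \ref{equiv1} determines $\Rc\s$ algebraically and pins down $R_0=n(n-1)\la^2$.
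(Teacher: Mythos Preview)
Your argument is correct and is essentially the same Friedrich-type approach as the paper's: both use the modified-connection identity \eqref{eqDf}, take the $L^2$-pairing with the eigensection, and invoke Theorem~\ref{equiv1} for the equality case. The only cosmetic difference is that the paper plugs in $f=\mu/n$ from the outset, whereas you carry a free parameter $\la$ and then optimize to recover the same choice $f=-\la=\mu/n$; you are also slightly more explicit about why $R_0\ge 0$ forces $\Rc$ to be positive semi-definite.
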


\begin{proof}
Let $\s$ be an eigensection of $D$ with eigenvalue $\mu$.  On \eqref{eqDf} let $f=\frac{\mu}{n}$ to obtain
\[
\mu^2\frac{n-1}{n}\s = \nb^{\frac{\mu}{n}*}\nb^{\frac{\mu}{n}}\s + \Rc\s.
\]
Since $M$ is compact, \eqref{CLapSelfad} yields
\[
\mu^2\frac{n-1}{n}(\s, \s) = (\nb^{\frac{\mu}{n}}\s, \nb^{\frac{\mu}{n}}\s) + (\Rc\s,\s)\geq R_0(\s,\s),
\]
which implies \eqref{Dlambda}.

Furthermore, if equality holds, then $\s$ is $\nb^{\frac{\mu}{n}}$-parallel; i.e. $\s$ is Killing with constant $\lambda=-\frac{\mu}{n}=\mp  \sqrt{\frac{R_0}{n(n-1)}} $. Conversely, if $\s$ is a Killing section with constant $\lambda=\mp  \sqrt{\frac{R_0}{n(n-1)}} $, then by Theorem \ref{equiv1},  
\[
D\s=-n\lambda\s=\pm\sqrt{\frac{n}{n-1}R_0}\,\s.
\]
Hence, $\mu=\pm\sqrt{\frac{n}{n-1}R_0}$ is an eigenvalue of $D$ and equality in \eqref{Dlambda} holds. 
\end{proof}

%

\section{The Clifford bundle case}
The main purpose of this section is to prove Theorem D.  First, we study the interplay of the Weitzenb\"ock operator with the additional structure of the Clifford bundle. Then, in Proposition \ref{Trace} we prove that the trace of the Weitzenb\"ock operator when restricted to $p$-forms is a multiple of the scalar curvature.  Lastly, we restrict our attention to spaces of constant sectional curvature and obtain explicit formulas for the Weitzenb\"ock operator.

Under the standard identification with the exterior bundle, the Dirac operator is given as  $D=d+d^*$, from which
 \be\label{D2Cliff}
 D^2=\Delta,
 \ee
where $\Delta$ is the usual Hodge laplacian (see \cite{Lawson} p. 123).  A simple computation shows that for any  function $f$ and any $n$-form $\omega$
\be\label{Rgari0nforms}
\Rc(f)=\Rc(\omega)=0.
\ee
Furthermore, for any $1$-form $\varphi$, 
 \be\label{RcRic}
 \Rc\varphi=\Ric(\varphi)
 \ee
 and $\Rc$ is positive semi-definite (resp. definite) whenever the curvature operator $R:\Lambda^2(M)\to \Lambda^2(M)$ is positive semi-definite (resp. definite) (see \cite{Lawson} pp. 156-160).  

Following the standard multi-index notation for forms (cf.  \cite{Kobayashi}), for any $e_1,\ldots,e_n$ local orthonormal frame and for any ordered $I\subset \{1,\ldots, n\}$ denote by $e_I$ the corresponding exterior form and denote by $I'= \{1,\ldots, n\}\setminus I$, its ordered complement.  Since the Hodge operator $\ast$ is given by the equation 
\be
\vp\wedge\ast\psi =\lea\vp,\psi\ria \vol,
\ee
where $\vol=e_{\{1,\ldots,n\}}$, it follows $\ast e_I=(-1)^{\ell_I}e_{I'}$, for an appropriate $\ell_I$.
\bp On the Clifford bundle the Weitzenb\"ock operator and the Hodge operator commute.
\ep
\begin{proof} 

In view of $\eqref{GenBochnerId}$, it is enough to prove that $\ast$ commutes with $D^2$ and $\nb^{*}\nb$.  Since the operator $D^2$ satisfies \eqref{D2Cliff}, it commutes with $\ast$. Hence, by \eqref{defconlap}, it is enough to prove that $\nb$ commutes with $\ast$. To see this, notice that 
\be\label{volformula}
\vol\cdot \varphi=(-1)^{p(n-p)+\frac{p(p+1)}{2}}\ast\varphi
\ee
 for any $p$-form $\varphi$ (see (5.35) in \cite{Lawson} p. 129). Now, since $\vol$ is parallel,
\begin{align*}
\nb_X(\ast \varphi)&=(-1)^{p(n-p)+\frac{p(p+1)}{2}}\nb_X(\vol\cdot\varphi)\\
&=(-1)^{p(n-p)+\frac{p(p+1)}{2}}\vol\cdot\nb_X\varphi\\&=\ast\nb_X\varphi,
\end{align*}
where \eqref{volformula} is used for $\varphi$ and $\nb_X\varphi$. 
\end{proof}
\bp\label{Trace} On the exterior bundle of $M$, for any $p=1,\ldots,n-1$, the trace of the restriction $\Rc_p$ of the Weitzenb\"ock operator to $p$-forms is
\be\label{trRcp}
\tr\,\Rc_p= {{n-2}\choose{p-1}}s,
\ee
where $s$ is the scalar curvature of $M$. In particular, in light of \eqref{Rgari0nforms}, the trace of the total Weitzenb\"ock operator is
\be\label{trR}
\tr\,\Rc=2^{n-2}s.
\ee
\ep
\begin{proof} 
As is well known, the curvature operator on  $2$-forms is determined by
\[
\lea {\bf R} (e_i\wedge e_j), e_k\wedge e_\ell\ria = \lea R(e_i,e_j)e_\ell,e_k\ria,
\]
and it is self-adjoint.   In terms of this operator, 
\[
\lea\Rc \vp, \vp \ria= \frac14\sum_{\xi,\zeta\in B} \lea{\bf R} \xi, \zeta\ria\lea [\xi,\vp], [\zeta,\vp] \ria,
\]
for any orthonormal basis $B$ of the space of $2$-forms (cf. \cite{Lawson} p. 159).   Using the above formula, the trace of $\Rc_p$ is given by
\be\label{trRcp1}
\tr\,\Rc_p=\sum_{|I|=p}\lea \Rc e_I, e_I\ria=\frac14\sum_{|I|=p}\sum_{\xi\in B} \lambda_\xi \big|[\xi,e_I]\big|^2,
\ee
assuming $B$ is a basis of eigen-$2$-forms of ${\bf R}$, with $\lambda_\xi$ the corresponding eigenvalue for $\xi\in B$.  To compute the right-hand side of \eqref{trRcp1}, let $\xi=\frac12\sum\xi^{ij}e_ie_j$ to yield that
\be\label{adjointnorm}
 \big|[\xi,e_I]\big|^2=\frac14\sum_{i,j,k,\ell}\xi^{ij}\xi^{k\ell}\lea [e_ie_j,e_I],[e_ke_\ell,e_I] \ria.
\ee
An easy computation gives that 
\be\label{Lawsonadjoint}
[e_ie_j,e_I]=\begin{cases}0 &{\rm if }\; \{i,j\}\subset I \\ 0 &{\rm if }\; \{i,j\}\subset I'\\ 2e_ie_je_I &{ \rm otherwise,}\end{cases}
\ee
from which it follows that $\lea [e_ie_j,e_I],[e_ke_\ell,e_I] \ria$ is nonzero only when $\{i,j\}=\{k,\ell\}$ provided that $\{i,j\}$ is not contained in either of $I$ or $I'$.  Taking into account these four cases, \eqref{adjointnorm} and \eqref{Lawsonadjoint} imply that

\begin{align*}
 \big|[\xi,e_I]\big|^2&= \sum_{i\in I}\sum_{j\in I'} (\xi^{ij})^2-\sum_{i\in I}\sum_{j\in I'} \xi^{ij}\xi^{ji}-\sum_{i\in I'}\sum_{j\in I} \xi^{ij}\xi^{ji}+\sum_{i\in I'}\sum_{j\in I} (\xi^{ij})^2\\
 &=4 \sum_{i\in I}\sum_{j\in I'} (\xi^{ij})^2,
\end{align*}
since $\xi^{ji}=-\xi^{ij}$. Replace $ \big|[\xi,e_I]\big|^2$ by this last expression in \eqref{trRcp1} to obtain
\be\label{trRcp2}
\tr\, \Rc_p=\sum_{\xi\in B} \lambda_\xi \sum_{|I|=p}\sum_{i\in I}\sum_{j\in I'} (\xi^{ij})^2.
\ee
Observe that for fixed $i$ and $j$, there are $n-2\choose p-1$ subsets $I$ for which $i\in I$ and $j\in I'$.  This implies that

\be\label{normxi}
 \sum_{|I|=p}\sum_{i\in I}\sum_{j\in I'} (\xi^{ij})^2={n-2\choose p-1}\sum_{i,j}(\xi^{ij})^2=2{n-2\choose p-1},
\ee
since $\xi\in B$ has norm 1. Finally, since
\[
\sum_{\xi\in B} \lambda_\xi=\tr\,{\bf R}=\frac s2,
\]
use  \eqref{trRcp2} and \eqref{normxi} to yield \eqref{trRcp}.  A standard computation yields \eqref{trR}. 
\end{proof}
Theorem \ref{twistorchar} can be used to solve the twistor equation completely on $\ClS$.  In order to do that the following preliminary result is needed, which might be of independent interest. 
\bl\label{lemtwsp} Let $M$ have constant sectional curvature $\kappa$. On the exterior bundle, the restriction $\Rc_p$ of the Weitzenb\"ock operator to $p$-forms is
\be
\Rc_p\varphi=\kappa p(n-p)\varphi.
\ee
\el
\proof
Let $e_1,\ldots, e_n$ be a local orthonormal frame.   From \cite{Lawson} p. 111,  the curvature operator on $\Cl$  can be written as
\be
R(X,Y)\varphi = \frac{1}{2}\sum_{k < l}\lea R(X,Y) e_k, e_l\ria [e_k\pc e_l,\varphi].
\ee
In particular, it follows from this formula and \eqref{Lawsonadjoint} that
\begin{align*}
R(e_i, e_j)e_I&= \frac{1}{4}\sum_{k, l}\lea R(e_i,e_j) e_k, e_l\ria [e_k\pc e_l,\varphi]  \equiv\frac14\sum_{k,\ell}R_{ijk\ell} [e_k\pc e_l,\varphi]\\
&=\frac12\sum_{k\in I}\sum_{\ell\in I'}R_{ijk\ell}\,e_k\cdot e_\ell\cdot  e_I+\frac12\sum_{k\in I'}\sum_{\ell\in I}R_{ijk\ell}\,e_k\cdot e_\ell \cdot e_I\\
&=\sum_{k\in I}\sum_{\ell\in I'}R_{ijk\ell}\,e_k\cdot e_\ell\cdot  e_I.
\end{align*}
Since $\kappa$ is constant,  $R(X,Y)Z=\kappa(\lea Y, Z \ria X- \lea X, Z \ria Y)$, and thus
\begin{align*}
\Rc_pe_I&=\frac12 \sum_{i,j}\sum_{k\in I}\sum_{\ell\in I'} \kappa(\delta_{i\ell}\delta_{jk}-\delta_{ik}\delta_{j\ell})e_i\cdot e_j\cdot e_k\cdot e_\ell \cdot e_I\\
\pushQED{\qed} 
&= \sum_{k\in I}\sum_{\ell\in I'}\kappa e_I=\kappa p (n-p) e_I.\qedhere
\popQED
\end{align*}
\begin{theo}\label{cliffSntwistor} 
The twistor sections of $\ClS$, $n\geq 2$, are given by
\be\label{twistCliff}
\s = c_1+df_1+d^*(\ast f_2) + \ast c_2,
\ee
with constants $c_1, c_2$ and functions $f_1, f_2: S^n\to\R$ such that $\Delta f_i =n f_i$.
\end{theo}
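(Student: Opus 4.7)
The plan is to combine Theorem~\ref{twistorchar} (applicable since $S^n$ is compact) with the explicit form of $\Rc$ in constant curvature from Lemma~\ref{lemtwsp} specialized to $\kappa = 1$. Under the standard isomorphism $\ClS \cong \La^*(S^n)$, $D^2$ coincides with the Hodge Laplacian $\Delta$ and $\Rc$ acts by the scalar $p(n-p)$ on $p$-forms. Decomposing $\sigma = \sum_{p=0}^n \sigma_p$ by form degree, the identity $D^2\sigma = \tfrac{n}{n-1}\Rc\sigma$ splits cleanly into $\Delta\sigma_p = \tfrac{n}{n-1}p(n-p)\sigma_p$ for each $p$; reapplying Theorem~\ref{twistorchar} grade by grade shows that each pure-grade component $\sigma_p$ is itself a twistor section.

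The key step is to apply the twistor equation $\nabla_X\sigma_p + \tfrac{1}{n}X\cdot D\sigma_p = 0$ directly to each pure-grade component. Using the Clifford multiplication formula $X\cdot\eta = X^\flat\wedge\eta - \iota_X\eta$ together with $D\sigma_p = d\sigma_p + d^*\sigma_p$, the product $X\cdot D\sigma_p$ has homogeneous components in degrees $p-2$, $p$, and $p+2$. Isolating the extremal-grade components of the twistor equation yields two purely algebraic constraints: at grade $p+2$ (when $p\leq n-2$), one has $X^\flat\wedge d\sigma_p = 0$ for every $X$, forcing $d\sigma_p = 0$; symmetrically, at grade $p-2$ (when $p\geq 2$), one obtains $d^*\sigma_p = 0$.

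From these extra constraints the classification is immediate, grade by grade. For $2\leq p\leq n-2$ both conditions apply, so $\Delta\sigma_p = 0$; combined with the eigenvalue equation and $p(n-p) > 0$, this forces $\sigma_p = 0$. For $p = 0$ (respectively $p = n$), only the eigenvalue equation is needed, yielding a harmonic function (respectively $n$-form) on $S^n$ and hence $\sigma_0 = c_1$, $\sigma_n = \ast c_2$. For $p = 1$ with $n\geq 3$, the constraint $d\sigma_1 = 0$ together with $H^1(S^n) = 0$ gives $\sigma_1 = df_1$, and the relation $\Delta df_1 = n\,df_1$ reduces to $\Delta f_1 = nf_1$ after absorbing a constant into $f_1$. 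The symmetric case $p = n-1$, $n\geq 3$, uses $d^*\sigma_{n-1} = 0$ and $H^{n-1}(S^n) = 0$ to write $\sigma_{n-1} = d^*\eta$ for some $n$-form $\eta = \ast f_2$ with $\Delta f_2 = nf_2$. When $n = 2$ the degrees $p = 1$ and $p = n-1$ coincide and no grade $p\pm 2$ constraints apply, so the Hodge decomposition of a single $1$-form eigensection produces both contributions $df_1$ and $d^*(\ast f_2)$ simultaneously.

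The main obstacle, and the crucial novel step, is the clean extraction of the two extremal-grade conditions $d\sigma_p = 0$ and $d^*\sigma_p = 0$ from the twistor equation for a pure-grade section; once these are in hand the remainder is a routine assembly of standard Hodge-theoretic facts on the round sphere.
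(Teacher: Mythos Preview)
Your argument is correct and takes a genuinely different route from the paper's proof.

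Both proofs begin identically: invoke Theorem~\ref{twistorchar}, Lemma~\ref{lemtwsp}, and \eqref{D2Cliff} to reduce the problem to the degree-by-degree eigenvalue equations $\Delta\s_p=\tfrac{n}{n-1}p(n-p)\s_p$. From there the paper appeals to the explicit spectrum of the Hodge laplacian on $p$-forms over $S^n$ (the Calabi/Gallot--Meyer formula $\mu_{n,p}=\min\{p(n-p+1),(p+1)(n-p)\}$) and checks that $\tfrac{n}{n-1}p(n-p)$ lies strictly below $\mu_{n,p}$ for $2\le p\le n-2$, and below the co-closed branch for $p=1$ when $n\ge3$, to force the relevant components to vanish or be exact. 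Your approach instead feeds the eigenvalue equation back through Theorem~\ref{twistorchar} to see that each homogeneous piece $\s_p$ is itself a twistor section, then reads off the constraints $d\s_p=0$ (for $p\le n-2$) and $d^*\s_p=0$ (for $p\ge2$) from the degree $p\pm2$ components of the first-order twistor equation, using only $X\cdot\eta=X^\flat\wedge\eta-\iota_X\eta$ and the fact that $\nb_X$ preserves degree. The rest is the same assembly via $H^k(S^n)=0$ for $0<k<n$.

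Your route is more self-contained: it avoids importing the sphere's form spectrum entirely, relying only on elementary Hodge theory and the vanishing of intermediate cohomology, and it makes transparent \emph{why} the middle degrees die (they are simultaneously closed and co-closed, hence harmonic with nonzero eigenvalue). The paper's route, by contrast, is shorter once the spectral data is granted and makes the numerical margins visible. Both are complete; yours would transplant more readily to other constant-curvature settings where the full spectrum is less accessible.
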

Solutions to $\Delta f =nf$ are well known to exist (see \cite{MR2002701} for details).
\begin{proof}[Proof of Theorem \ref{cliffSntwistor}]
From Theorem $\ref{twistorchar}$ to find twistor sections is equivalent to solving \eqref{twistorequiv}. Let $\s=\s_0+\cdots+\s_n$, where $\s_p$ is of degree $p$.  From \eqref{D2Cliff}  and Lemma \ref{lemtwsp}, $\s$ is a twistor section if and only if
\begin{equation}\label{teqsp}
\Delta \s_p= \frac{n}{n-1}p(n-p)\s_p,\qquad p=0, 1,\,\ldots, n.
\end{equation}
Hereinafter the fact that $\Delta$ commutes with $d$ and $d^*$ is used extensively; e.g. if $\s_0=g$ and $\s_n=\ast h$, by \eqref{teqsp} $g$ and $h$ are harmonic and thus constant.  

Now, for $p\neq0$ and $p\neq n$, the smallest positive eigenvalue $\mu_{n,p}$ for $\Delta$ on the space of $p$-forms of $S^n$ is given as  the minimum 
 \be\label{Deltavalue}
\mu_{n,p}=\min\{p(n-p+1), (p+1)(n-p)\},
\ee
where $p(n-p+1)$ (resp. $(p+1)(n-p)$) corresponds to the first  positive eigenvalue when restricted to closed  (resp. co-closed) $p$-forms.\footnote{These formulas are due to Calabi (unpublished). See \cite{GueSa} for details.}

For $p=1$, \eqref{teqsp} is written as $\Delta\s_1=n\s_1$ and thus 
\be\label{Deltas1}
\s_1=df_1+d^*\eta
\ee
with $f_1,\eta\in\ker(\Delta-n)$. The case $n=2$ follows since $\eta=\ast f_2$.  For $n=3$ and $p=1$, since $p(n-p+1)=3$, $(p+1)(n-p)=4$ and \eqref{teqsp} is $\Delta\s_1=3\s_1$, it follows that any solution $\s_1$ must be  closed and thus in \eqref{Deltas1} $d^*\eta=0$. For $n=3$ and $p=2$ the analysis is analogous. Therefore, there exist functions $f_1$ and $f_2$ in the kernel of $\Delta-n$ such that 
\be\label{Eigen1n-1}
\s_1=df_1\quad\text{ and }\quad \s_{n-1}=d^*(\ast f_2),
\ee
which proves the claim for $n=3$. For $n\geq 4$, notice that for $p=2,\ldots, n-2$, 
\[
\frac{n}{n-1}p(n-p)<\mu_{n,p},
\]
and thus the only solution $\s_p$ to \eqref{teqsp} is the trivial solution; therefore
\[\s=\s_0+\s_1+\s_{n-1}+\s_n.
\]
Since $n<\mu_{n,2}$, for $p=1$ it follows that in  \eqref{Deltas1} $\eta\equiv 0$. The case $p=n-1$ is analogous. Therefore, there exist functions $f_1$ and $f_2$ as in \eqref{Eigen1n-1}, which finishes the proof.
\end{proof}


\phantomsection
\addcontentsline{toc}{section}{A. The two-dimensional case}
\renewcommand\thesection{\Alph{section}}
\setcounter{section}{0}
\tocless\section{The two-dimensional case}
Here we analyze the Killing equation on Clifford bundles over surfaces. This equation is much more restrictive than the twistor equation as the following result shows. \\

\bt\label{Kill2D} For a compact $2$-dimensional $M$, there are nontrivial Killing sections of $\Cl$  with value $\lambda$ if and only if $\lambda=0$ and  the gaussian curvature $\kappa\equiv 0$.  Furthermore any such section is parallel.
\et
To prove this, a couple of preliminary observations are required.   Firstly,  using the general fact that $\Cl$ splits into a Whitney sum of its even and odd parts, it is easy to see that the Clifford multiplication and the Dirac operator map the even into the odd and viceversa, while the Weitzenb\"ock operator preserves them. 

By \eqref{Rgari0nforms} and \eqref{RcRic}, it follows that for a two-dimensional $M$,
\be\label{curv2}
\Rc\s=\kappa\s_1,
\ee
for any section $\s\in\Cl$,  where $\s_1$ is the odd part of $\s$.
\bl\label{eigenD2Dlem} For a compact $2$-dimensional $M$, let $\s$ an eigensection of $D$ with eigenvalue $\lambda\neq0$.  Then the even and odd parts of $\s$ are non trivial and 
\be\label{eigenD2D}
\lambda^2\geq \min\kappa.
\ee 
\el
\proof   Let $\s_0$ and $\s_1$ be the even and odd parts of $\s$, respectively. Since $D$ shifts degree, $D\s_0 = \la \s_1$ and $D\s_1 = \la \s_0$. In particular, since $\lambda\neq0$ and $\s$ is nontrivial then so are $\s_0$ and $\s_1$. Finally,  using the Bochner identity and \eqref{curv2},
\[
\la^2 \|\s_1\|^2 = \|\nb\s_1\|^2 + \ka\|\s_1\|^2\geq \ka\|\s_1\|^2,
\]
from which \eqref{eigenD2D} follows.
\endproof
The estimate from \eqref{eigenD2D} is slightly better than that of \eqref{Dlambda}, albeit in this trivial case.
\proof[Proof of Theorem \ref{Kill2D}]
Since $\s$ is a nontrivial Killing section, Theorem $\ref{equiv1}$ implies that 
\be\label{TeoC2D}
\Rc\s=2\la^2\s\quad\text{and}\quad D\s=-2\la \s.
\ee
 In light of \eqref{RcRic}, the first equation is $\ka\s_1=2 \la^2\s$, which is satisfied if and only if $2\la^2\s_0 =0$ and $\ka = 2\la^2$.  This, together with the second equation in \eqref{TeoC2D} and Lemma \ref{eigenD2Dlem} implies that $\lambda=0$ and $\kappa\equiv 0$.  Conversely, $\lambda=0$ means that $\s$ is parallel; $\kappa\equiv 0$ that $M$ is flat, which guarantees the existence of nontrivial parallel sections. 
\endproof

{\bf Acknowledgements.}  The first two authors are supported by the  C\'a\-te\-dras Conacyt Program Project No. 61. The third  author was supported by the  FORDECYT Grant No. 265667 (CONACYT) as a postdoctoral fellow.   The third author wishes to express his gratitude to the Instituto de Ma\-te\-m\'a\-ti\-cas Oaxaca (UNAM) for its hospitality, where most of this project was developed.


\bibliographystyle{plainnat}
\bibliography{../referencias}

\begin{thebibliography}{20}
\providecommand{\natexlab}[1]{#1}
\providecommand{\url}[1]{\texttt{#1}}
\expandafter\ifx\csname urlstyle\endcsname\relax
  \providecommand{\doi}[1]{doi: #1}\else
  \providecommand{\doi}{doi: \begingroup \urlstyle{rm}\Url}\fi

\bibitem[Atiyah and Ward(1977)]{AW}
M.~F. Atiyah and R.~S. Ward.
\newblock Instantons and algebraic geometry.
\newblock \emph{Comm. Math. Phys.}, 55\penalty0 (2):\penalty0 117--124, 1977.
\newblock ISSN 0010-3616.
\newblock URL \url{http://projecteuclid.org/euclid.cmp/1103900980}.

\bibitem[Atiyah et~al.(1978)Atiyah, Hitchin, and Singer]{AHS}
M.~F. Atiyah, N.~J. Hitchin, and I.~M. Singer.
\newblock Self-duality in four-dimensional {R}iemannian geometry.
\newblock \emph{Proc. Roy. Soc. London Ser. A}, 362\penalty0 (1711):\penalty0
  425--461, 1978.
\newblock ISSN 0962-8444.
\newblock \doi{10.1098/rspa.1978.0143}.
\newblock URL \url{https://doi.org/10.1098/rspa.1978.0143}.

\bibitem[B\"{a}r(1996)]{Bar}
Christian B\"{a}r.
\newblock The {D}irac operator on space forms of positive curvature.
\newblock \emph{J. Math. Soc. Japan}, 48\penalty0 (1):\penalty0 69--83, 1996.
\newblock ISSN 0025-5645.
\newblock \doi{10.2969/jmsj/04810069}.
\newblock URL \url{https://doi.org/10.2969/jmsj/04810069}.

\bibitem[Baum et~al.(1991)Baum, Friedrich, Grunewald, and Kath]{BFGK}
Helga Baum, Thomas Friedrich, Ralf Grunewald, and Ines Kath.
\newblock \emph{Twistors and {K}illing spinors on {R}iemannian manifolds},
  volume 124 of \emph{Teubner-Texte zur Mathematik [Teubner Texts in
  Mathematics]}.
\newblock B. G. Teubner Verlagsgesellschaft mbH, Stuttgart, 1991.
\newblock ISBN 3-8154-2014-8.
\newblock With German, French and Russian summaries.

\bibitem[Berger(2003)]{MR2002701}
Marcel Berger.
\newblock \emph{A panoramic view of {R}iemannian geometry}.
\newblock Springer-Verlag, Berlin, 2003.
\newblock ISBN 3-540-65317-1.
\newblock \doi{10.1007/978-3-642-18245-7}.
\newblock URL \url{https://doi.org/10.1007/978-3-642-18245-7}.

\bibitem[Friedrich(1990)]{Friedrich1}
Thomas Friedrich.
\newblock On the conformal relation between twistors and {K}illing spinors.
\newblock In \emph{Proceedings of the {W}inter {S}chool on {G}eometry and
  {P}hysics ({S}rn\'{\i}, 1989)}, number~22, pages 59--75, 1990.

\bibitem[Friedrich(2000)]{Friedrich}
Thomas Friedrich.
\newblock \emph{Dirac operators in {R}iemannian geometry}, volume~25 of
  \emph{Graduate Studies in Mathematics}.
\newblock American Mathematical Society, Providence, RI, 2000.
\newblock ISBN 0-8218-2055-9.
\newblock \doi{10.1090/gsm/025}.
\newblock URL \url{https://doi.org/10.1090/gsm/025}.
\newblock Translated from the 1997 German original by Andreas Nestke.

\bibitem[Friedrich and Pokorn\'{a}(1991)]{FriedPok}
Thomas Friedrich and Olga Pokorn\'{a}.
\newblock Twistor spinors and solutions of the equation ({E}) on {R}iemannian
  manifolds.
\newblock In \emph{Proceedings of the {W}inter {S}chool on {G}eometry and
  {P}hysics ({S}rn\'{\i}, 1990)}, number~26, pages 149--154, 1991.

\bibitem[Guerini and Savo(2004)]{GueSa}
Pierre Guerini and Alessandro Savo.
\newblock Eigenvalue and gap estimates for the {L}aplacian acting on
  {$p$}-forms.
\newblock \emph{Trans. Amer. Math. Soc.}, 356\penalty0 (1):\penalty0 319--344,
  2004.
\newblock ISSN 0002-9947.
\newblock \doi{10.1090/S0002-9947-03-03336-1}.
\newblock URL \url{https://doi.org/10.1090/S0002-9947-03-03336-1}.

\bibitem[Habermann(1990)]{Habermann}
Katharina Habermann.
\newblock The twistor equation on {R}iemannian manifolds.
\newblock \emph{J. Geom. Phys.}, 7\penalty0 (4):\penalty0 469--488 (1991),
  1990.
\newblock ISSN 0393-0440.
\newblock \doi{10.1016/0393-0440(90)90002-K}.
\newblock URL \url{https://doi.org/10.1016/0393-0440(90)90002-K}.

\bibitem[Habermann(1994)]{Habermann1}
Katharina Habermann.
\newblock Twistor spinors and their zeroes.
\newblock \emph{J. Geom. Phys.}, 14\penalty0 (1):\penalty0 1--24, 1994.
\newblock ISSN 0393-0440.
\newblock \doi{10.1016/0393-0440(94)90051-5}.
\newblock URL \url{https://doi.org/10.1016/0393-0440(94)90051-5}.

\bibitem[Kobayashi(1987)]{Kobayashi}
Shoshichi Kobayashi.
\newblock \emph{Differential geometry of complex vector bundles}, volume~15 of
  \emph{Publications of the Mathematical Society of Japan}.
\newblock Princeton University Press, Princeton, NJ; Princeton University
  Press, Princeton, NJ, 1987.
\newblock ISBN 0-691-08467-X.
\newblock \doi{10.1515/9781400858682}.
\newblock URL \url{https://doi.org/10.1515/9781400858682}.
\newblock Kan\^{o} Memorial Lectures, 5.

\bibitem[Kobayashi and Nomizu(1996)]{KNvol1}
Shoshichi Kobayashi and Katsumi Nomizu.
\newblock \emph{Foundations of differential geometry. {V}ol. {I}}.
\newblock Wiley Classics Library. John Wiley \& Sons, Inc., New York, 1996.
\newblock ISBN 0-471-15733-3.
\newblock Reprint of the 1963 original, A Wiley-Interscience Publication.

\bibitem[K\"{u}hnel and Rademacher(1994)]{KuRa}
Wolfgang K\"{u}hnel and Hans-Bert Rademacher.
\newblock Twistor spinors with zeros.
\newblock \emph{Internat. J. Math.}, 5\penalty0 (6):\penalty0 877--895, 1994.
\newblock ISSN 0129-167X.
\newblock \doi{10.1142/S0129167X94000450}.
\newblock URL \url{https://doi.org/10.1142/S0129167X94000450}.

\bibitem[Lawson and Michelsohn(1989)]{Lawson}
H.~Blaine Lawson, Jr. and Marie-Louise Michelsohn.
\newblock \emph{Spin geometry}, volume~38 of \emph{Princeton Mathematical
  Series}.
\newblock Princeton University Press, Princeton, NJ, 1989.
\newblock ISBN 0-691-08542-0.

\bibitem[Lichnerowicz(1987)]{Lich2}
Andr\'{e} Lichnerowicz.
\newblock Spin manifolds, {K}illing spinors and universality of the {H}ijazi
  inequality.
\newblock \emph{Lett. Math. Phys.}, 13\penalty0 (4):\penalty0 331--344, 1987.
\newblock ISSN 0377-9017.
\newblock \doi{10.1007/BF00401162}.
\newblock URL \url{https://doi.org/10.1007/BF00401162}.

\bibitem[Lichnerowicz(1988)]{Lichnerowicz}
Andr\'{e} Lichnerowicz.
\newblock Killing spinors, twistor-spinors and {H}ijazi inequality.
\newblock \emph{J. Geom. Phys.}, 5\penalty0 (1):\penalty0 1--18, 1988.
\newblock ISSN 0393-0440.
\newblock \doi{10.1016/0393-0440(88)90011-3}.
\newblock URL \url{https://doi.org/10.1016/0393-0440(88)90011-3}.

\bibitem[Penrose(1967)]{Penrose1}
R.~Penrose.
\newblock Twistor algebra.
\newblock \emph{J. Mathematical Phys.}, 8:\penalty0 345--366, 1967.
\newblock ISSN 0022-2488.
\newblock \doi{10.1063/1.1705200}.
\newblock URL \url{https://doi.org/10.1063/1.1705200}.

\bibitem[Penrose(1975)]{Penrose2}
R~Penrose.
\newblock Twistor theory, its aims and achievements.
\newblock In R.~Penrose, D.~W. Sciama, and C.~J. Isham, editors, \emph{Quantum
  gravity}, pages 268--407. Clarendon Press, Oxford, 1975.
\newblock ISBN 0-19-851943-5.

\bibitem[Penrose and Rindler(1988)]{Penrose0}
Roger Penrose and Wolfgang Rindler.
\newblock \emph{Spinors and space-time. {V}ol. 2}.
\newblock Cambridge Monographs on Mathematical Physics. Cambridge University
  Press, Cambridge, second edition, 1988.
\newblock ISBN 0-521-34786-6.
\newblock Spinor and twistor methods in space-time geometry.

\end{thebibliography}

\end{document}